\def\RR{\mathbb{R}}
\def\calP{{\cal P}}
\def\nknots{N}
\def\hmin{h_\mathrm{min}}
\def\prec{t}
\def\rbar{r}
\def\Cgen{{\mathfrak{C}}}
\def\knots{\Xi}
\newtheorem{theorem}{Theorem}
\newtheorem{lemma}{Lemma}
\newtheorem{proposition}{Proposition}
\theoremstyle{remark}\newtheorem{remark}{Remark}
\theoremstyle{remark}\newtheorem{example}{Example}
\begin{document}

\title{Ritz-type projectors with boundary interpolation properties and explicit spline error estimates}
%
\author[1]{Espen Sande\thanks{espen.sande@epfl.ch}}
\author[2]{Carla Manni\thanks{manni@mat.uniroma2.it}}
\author[2]{Hendrik Speleers\thanks{speleers@mat.uniroma2.it}}

\affil[1]{\small Institute of Mathematics, EPFL, Lausanne, Switzerland}
\affil[2]{\small Department of Mathematics, University of Rome Tor Vergata, Italy}

\maketitle

\begin{abstract} 
In this paper we construct Ritz-type projectors with boundary interpolation properties in finite dimensional subspaces of the usual Sobolev space and we provide a priori error estimates for them. 
The abstract analysis is exemplified by considering spline spaces and we equip the corresponding error estimates with explicit constants.
This complements our results recently obtained for explicit spline error estimates based on the classical Ritz projectors in
[Numer.\ Math.\ 144(4):889--929, 2020].
\end{abstract}

\section{Introduction}
Error estimates for Ritz projections play an important role in the theoretical analysis of the Ritz-Galerkin approximation of differential problems; see, e.g., \cite{Brenner:2008}. 
In this paper we present an abstract framework for constructing a family of Ritz-type projectors with boundary interpolation properties in finite dimensional
subspaces of the usual Sobolev space 
\begin{equation}\label{eq:Hr-def}
H^r(a,b):=\{u\in L^2(a,b) : \partial^\alpha u \in L^2(a,b),\, \alpha=1,\ldots,r\},
\end{equation}
and we provide a priori error estimates for them in $L^2$ and standard Sobolev (semi-)norms. 

Let $\calP_p$ be the space of polynomials of degree less than or equal to $p$.
The framework requires that the considered finite dimensional subspace contains the polynomial space $\calP_{2q-1}$ to ensure interpolation of the function and its derivatives up to order $q-1\leq r-1$ at the two ends of the interval $[a,b]$. 
These Hermite interpolation properties at the boundary enhance and complement the results recently obtained for the classical Ritz projectors in \cite{Sande:2020}. It turns out that the two kinds of projectors are deeply related. For a fixed $q$, the difference between the proposed and the corresponding classical Ritz projection belongs to $\calP_{q-1}$; actually, it equals the $L^2$-projection onto $\calP_{q-1}$ of the error of the new Ritz-type projection (see Proposition~\ref{pro:E}). We also provide bounds in Sobolev semi-norms for this difference.

The abstract construction and the related error estimates are elaborated and discussed for projectors onto spline spaces of arbitrary smoothness defined on arbitrary grids. In this case, explicit constants can be deduced for the error bounds. These constants agree with those obtained in \cite{Sande:2020} for the classical Ritz projectors, and so with the numerical evidence found in the literature that smoother spline spaces exhibit a better approximation
behavior per degree of freedom, even for low smoothness of the functions to be approximated.

Projectors with Hermite interpolation properties at the boundary are of particular interest in practice, because they directly allow for building globally smooth approximants by simply gluing locally constructed ones.
When locally working with polynomials, the most well known of such projectors is probably cubic Hermite interpolation as it results in a local construction of $C^1$ cubic spline interpolants. This can be extended to higher smoothness, say $q-1$, by locally considering polynomials of degree at least $2q-1$; see \cite{Buffa:11} for an application in the context of isogeometric analysis.
An interesting and powerful alternative is to replace polynomials by splines as local approximation spaces. 

A common way to obtain projectors with boundary interpolation properties onto spline spaces (of odd degree) is based on (Hermite) interpolation at the knots, by extending the classical construction of $C^2$ cubic spline interpolants. General error estimates with explicit constants for such projectors have been provided in \cite{Schultz:70}. Our projectors do not have any restriction on the spline degree and numerical evaluations reveal that our estimates improve upon those in \cite{Schultz:70}, often involving much smaller constants (see Section~\ref{sec:Schultz}). Better constants for the same interpolating projectors onto odd degree spline spaces can be found in \cite{Agarwal:94} but they are not explicit in most cases. For maximally smooth spline spaces, the latter constants are explicit but remain larger than those in the present work.
Projectors analogous to those in \cite{Schultz:70,Agarwal:94} have also been investigated in \cite{Ainsworth:2020} for periodic boundary conditions and for Lidstone interpolation at the boundary in the case of maximally smooth spline spaces; in those two cases the same explicit error bounds have been obtained as ours (cf. Remark~\ref{rmk:estimate-simple}).

The key ingredient to get our projectors and the corresponding error estimates is the representation of the considered Sobolev spaces and the approximating spline spaces in terms of integral operators described by suitable kernels, following the approach already exploited in \cite{Sande:2020} and earlier work (see, e.g., \cite{Floater:2018,Sande:2019}).

The remainder of this paper is organized as follows. In Section~\ref{sec:gen} we briefly summarize from \cite{Sande:2020} the
abstract framework we are dealing with.
Section~\ref{sec:interp-projector} describes the general construction of the new family of projectors, analyses their boundary interpolation properties, and provides error estimates for them in $L^2$ and standard Sobolev (semi-)norms. The relation between the proposed and the classical Ritz projector is discussed in Section~\ref{sec:Ritz}, where bounds for standard Sobolev semi-norms of their difference are also provided.
The relevant case of projectors onto spline spaces is elaborated in
Section~\ref{sec:spline}, and particular attention is devoted to error bounds with explicit constants.
Finally, Section~\ref{sec:conclusion} collects some final remarks and highlights possible areas of applications of the presented results.

\section{General error estimates}\label{sec:gen}
In this section we describe an abstract framework to obtain error estimates for the $L^2$-projection onto spaces defined in terms of integral operators. This section is based on \cite[Section~2]{Sande:2020}.

For real-valued functions $f$ and $g$ we denote the norm and inner product on $L^2(a,b)$ by
\begin{equation*}
\| f\|^2 := (f,f), \quad (f,g) := \int_a^b f(x) g(x) dx,
\end{equation*}
and we consider the Sobolev spaces \eqref{eq:Hr-def}.
Let $K$ be the integral operator defined by integrating from the left,
\begin{equation*}
(Kf)(x):=\int_a^xf(y)dy.
\end{equation*}
We denote by $K^*$ the adjoint, or dual, of the operator $K$,
defined by
\begin{equation*}
 (f,K^\ast g) = (Kf, g).
\end{equation*}
One can check that $K^*$ is integration from the right,
\begin{equation*}
(K^*f)(x)=\int_x^bf(y)dy;
\end{equation*}
see, e.g., \cite[Section~7]{Floater:2018}. 

Given any finite dimensional subspace $\mathcal{Z}_0\supseteq\mathcal{P}_0$ of $L^2(a,b)$ and any integral operator $K$, we let $\mathcal{Z}_\prec$ for $\prec\geq 1$ be defined by $\mathcal{Z}_\prec:=\mathcal{P}_0+K(\mathcal{Z}_{\prec-1})$. We further assume that they satisfy the equality
\begin{equation}\label{eq:Xsimpl}
\mathcal{Z}_\prec:=\mathcal{P}_0+K(\mathcal{Z}_{\prec-1}) =\mathcal{P}_0+K^*(\mathcal{Z}_{\prec-1}),
\end{equation}
where the sums do not need to be orthogonal (or even direct). From the definition the following equivalence is easy to check,
\begin{equation*}
\mathcal{P}_{k}\subseteq\mathcal{Z}_{\prec} \quad\Leftrightarrow\quad
\mathcal{P}_{k-q}\subseteq\mathcal{Z}_{\prec-q},
\end{equation*}
for any $q\leq k,t$.
Moreover, let $Z_\prec$ be the $L^2$-projector onto $\mathcal{Z}_\prec$, and define $\Cgen_{\prec,\rbar}\in\RR$ for $\prec,\rbar\geq 0$ to be
\begin{equation*} 
\Cgen_{\prec,\rbar}:=\|(I-Z_\prec)K^\rbar\|.
\end{equation*}
Observe that $\Cgen_{\prec,0}=1$ and $\Cgen_{0,1}:=\|(I-Z_0)K\|=\|(I-Z_0)K^*\|$. 

The space $H^r(a,b)$ can be described as
\begin{equation}\label{eq:Hr}
H^r(a,b)=\mathcal{P}_{0} + K(H^{r-1}(a,b))=\mathcal{P}_{0} + K^*(H^{r-1}(a,b))
=\mathcal{P}_{r-1}+K^r(H^0(a,b)),
\end{equation}
with $H^0(a,b)=L^2(a,b)$ and $\mathcal{P}_{-1}=\{0\}$.
Thus, any $u\in H^r(a,b)$ is of the form $u=g+K^rf$ for $g\in \mathcal{P}_{r-1}$ and $f\in L^2(a,b)$. This leads to the following error estimate for the $L^2$-projection (see also \cite[Theorem~1]{Sande:2020}).

\begin{lemma}\label{lem:L2}
Let $Z_\prec$ be the $L^2$-projector onto $\mathcal{Z}_\prec$ and assume $\mathcal{P}_{r-1}\subseteq\mathcal{Z}_\prec$. Then, for any $u\in H^r(a,b)$ we have
\begin{equation}\label{ineq:L2}
\|u-Z_\prec u\|\leq  \Cgen_{\prec,r}\|\partial^ru\|.
\end{equation}
\end{lemma}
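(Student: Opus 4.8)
The plan is to exploit the key structural fact in equation \eqref{eq:Hr}, namely that any $u\in H^r(a,b)$ can be written as $u = g + K^r f$ with $g\in\mathcal{P}_{r-1}$ and $f\in L^2(a,b)$, and moreover that $f = \partial^r u$. The essential observation is that $K$ is an antiderivative operator, so $K^r f$ is an $r$-fold integral of $f$ and differentiating $r$ times recovers $f$; since $g$ is a polynomial of degree at most $r-1$, it is annihilated by $\partial^r$, giving $\partial^r u = f$. This identifies the ``right-hand side'' $f$ in \eqref{ineq:L2} concretely as $\partial^r u$.

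With this decomposition in hand, the second step is to apply the projector error $I - Z_\prec$ to $u$. Because $Z_\prec$ is a linear projector and, by the hypothesis $\mathcal{P}_{r-1}\subseteq\mathcal{Z}_\prec$, it fixes the polynomial part $g$ (i.e.\ $Z_\prec g = g$), we get $(I-Z_\prec)u = (I-Z_\prec)(g + K^r f) = (I-Z_\prec)K^r f$. This is the crucial cancellation: the polynomial component contributes nothing to the error precisely because the approximating space is rich enough to reproduce $\mathcal{P}_{r-1}$.

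The final step is simply to bound the operator. Taking norms and using the definition $\Cgen_{\prec,r} := \|(I-Z_\prec)K^r\|$ as the operator norm of $(I-Z_\prec)K^r$ acting on $L^2(a,b)$, I would write
\begin{equation*}
\|u - Z_\prec u\| = \|(I-Z_\prec)K^r f\| \leq \|(I-Z_\prec)K^r\|\,\|f\| = \Cgen_{\prec,r}\|\partial^r u\|,
\end{equation*}
where the last equality uses $f = \partial^r u$. This yields \eqref{ineq:L2} directly.

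I do not expect any serious obstacle here, since the result is essentially a clean consequence of the integral-operator representation combined with polynomial reproduction. The only point requiring mild care is verifying that $\partial^r(K^r f) = f$ and that $\partial^r g = 0$ for $g\in\mathcal{P}_{r-1}$, i.e.\ confirming that the abstract decomposition \eqref{eq:Hr} is compatible with the $r$-th derivative; but this follows immediately from $K$ being integration from the left. One should also note that the argument is independent of the specific choice of subspace $\mathcal{Z}_\prec$ beyond the reproduction hypothesis $\mathcal{P}_{r-1}\subseteq\mathcal{Z}_\prec$, which is exactly what makes the estimate abstractly applicable.
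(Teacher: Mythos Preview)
Your proposal is correct and follows essentially the same approach as the paper's own proof: decompose $u = g + K^r f$ via \eqref{eq:Hr}, use $\mathcal{P}_{r-1}\subseteq\mathcal{Z}_\prec$ to kill the polynomial part under $I-Z_\prec$, bound by the operator norm $\Cgen_{\prec,r}$, and identify $f=\partial^r u$. The paper's argument is slightly more terse but otherwise identical.
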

\begin{proof}
Since $\mathcal{P}_{r-1}\subseteq\mathcal{Z}_\prec$ and using \eqref{eq:Hr}, we have $u=g+K^rf$ for $g\in \mathcal{P}_{r-1}$ and $f\in L^2(a,b)$. Thus,
\begin{equation}\label{ineq:Hr}
\|u-Z_\prec u\|=\|g+K^rf-Z_\prec(g+K^rf)\|= \|(I-Z_\prec) K^rf\| \leq  \Cgen_{\prec,r}\|f\|,
\end{equation}
and the result follows from the identity $\partial^ru=f$.
\end{proof}
By definition of the operator norm, the constant $\Cgen_{\prec,r}$ is the smallest possible constant such that the last inequality in \eqref{ineq:Hr} holds for all $f\in L^2(a,b)$. We thus see from the above proof that whenever $\mathcal{P}_{r-1}\subseteq\mathcal{Z}_\prec$, the constant $\Cgen_{\prec,r}$ is the smallest possible constant such that \eqref{ineq:L2} holds for all $u\in  H^r(a,b)$.

\section{A projection with boundary interpolation}\label{sec:interp-projector}
Similarly to \cite{Sande:2019}, we define a sequence of projection operators $Q_\prec^q:H^q(a,b)\to \mathcal{Z}_{\prec}$, for $q=0,\ldots,\prec$, by $Q_{\prec}^0:=Z_{\prec}$ and
\begin{equation}\label{eq:Qproj}
Q_\prec^qu := u(a) + KQ_{\prec-1}^{q-1}\partial u.
\end{equation}
These projections, by definition, commute with the derivative: $\partial Q_\prec^q=Q_{\prec-1}^{q-1}\partial$.
Note that $\partial^q Q_\prec^q=Z_{\prec-q}\partial^q$ can equivalently be stated as
\begin{equation}\label{eq:Qritz}
(\partial^{q}Q_\prec^q u,\partial^qv) = (\partial^q u, \partial^q v), \quad \forall v\in \mathcal{Z}_\prec,
\end{equation} 
since $\partial^q\mathcal{Z}_\prec=\mathcal{Z}_{\prec-q}$. It is also easy to see that these projections satisfy the interpolation property
\begin{equation}\label{eq:Qinterp}
(Q_\prec^qu)^{(\ell)}(a)= u^{(\ell)}(a),\quad \ell=0,\ldots,q-1.
\end{equation}
In the following lemma we show under which conditions there is interpolation at the other end point of the domain.
\begin{lemma}\label{lem:Qinterp}
If $\mathcal{P}_{2q-\ell-1}\subseteq\mathcal{Z}_\prec$ then the projector $Q_\prec^q$ in \eqref{eq:Qproj} satisfies $(Q_\prec^qu)^{(\ell)}(b)= u^{(\ell)}(b)$ for $\ell=0,\ldots,q-1$.
\end{lemma}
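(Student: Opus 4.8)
The plan is to convert the pointwise boundary statement at $b$ into a statement about a polynomial moment of the error derivative $\partial^q(u-Q_\prec^q u)$, which then vanishes by the Galerkin-type orthogonality \eqref{eq:Qritz}; the data generated at the left endpoint will be killed by the already established interpolation property \eqref{eq:Qinterp}. Throughout, write $e:=u-Q_\prec^q u$, so that the goal is to show $e^{(\ell)}(b)=0$ for $\ell=0,\ldots,q-1$. Since $u\in H^q(a,b)$ and $Q_\prec^q u\in\mathcal{Z}_\prec\subseteq H^\prec(a,b)$ with $\prec\geq q$ (the operator $K$ raises regularity by one at each step in the definition of $\mathcal{Z}_\prec$), we have $\partial^\ell e\in H^{q-\ell}(a,b)$, which is the regularity needed below.

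First I would record orthogonality of $\partial^q e$ against low-degree polynomials. Because $\mathcal{P}_{2q-\ell-1}\subseteq\mathcal{Z}_\prec$, every $v\in\mathcal{P}_{2q-\ell-1}$ is an admissible test function in \eqref{eq:Qritz}, giving $(\partial^q e,\partial^q v)=0$. As $v$ runs over $\mathcal{P}_{2q-\ell-1}$ its derivative $\partial^q v$ runs over all of $\mathcal{P}_{q-\ell-1}$, since $\partial^q\colon\mathcal{P}_{2q-\ell-1}\to\mathcal{P}_{q-\ell-1}$ is onto. Hence $(\partial^q e,p)=0$ for every $p\in\mathcal{P}_{q-\ell-1}$.

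Next I would expand $\partial^\ell e$ about the left endpoint $a$ using Taylor's formula with integral remainder:
\[
e^{(\ell)}(b)=\sum_{j=0}^{q-\ell-1}\frac{(b-a)^j}{j!}\,e^{(\ell+j)}(a)
+\frac{1}{(q-\ell-1)!}\int_a^b (b-y)^{q-\ell-1}\,\partial^q e(y)\,dy .
\]
Each index in the sum satisfies $\ell+j\leq q-1$, so every term $e^{(\ell+j)}(a)$ vanishes by the left-endpoint interpolation \eqref{eq:Qinterp}, and the whole sum drops out. The remaining integral equals $\frac{1}{(q-\ell-1)!}\big(\partial^q e,(b-\cdot)^{q-\ell-1}\big)$, whose kernel $(b-\cdot)^{q-\ell-1}$ is a polynomial of degree $q-\ell-1$; by the orthogonality of the first step it too vanishes. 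This gives $e^{(\ell)}(b)=0$, that is, $(Q_\prec^q u)^{(\ell)}(b)=u^{(\ell)}(b)$, as claimed.

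There is no serious analytic obstacle here; the one point that needs care is the degree bookkeeping, which I would lay out explicitly to show the hypothesis is exactly what is required. The Taylor remainder kernel has degree $q-\ell-1$, and the orthogonality produced in the first step reaches precisely that degree; since $\partial^q$ lowers polynomial degree by $q$, guaranteeing orthogonality up to degree $q-\ell-1$ after differentiation forces the test space to contain $\mathcal{P}_{2q-\ell-1}$, which is the stated assumption $\mathcal{P}_{2q-\ell-1}\subseteq\mathcal{Z}_\prec$. Equivalently, one may run the first step through $\mathcal{P}_{q-\ell-1}\subseteq\mathcal{Z}_{\prec-q}=\partial^q\mathcal{Z}_\prec$ using the degree-shift equivalence recorded after \eqref{eq:Xsimpl}, arriving at the same orthogonality.
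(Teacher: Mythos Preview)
Your proof is correct and takes a genuinely different route from the paper's. The paper proceeds by downward induction on $\ell$, starting at $\ell=q-1$, and at each step performs $q-\ell-1$ integrations by parts in the identity $(\partial^q e,\partial^q v)=0$; the boundary terms at $b$ generated along the way are killed by the induction hypothesis (interpolation at $b$ already established for higher derivatives), and then a monomial test function $v=x^{2q-\ell-1}/(2q-\ell-1)!$ is chosen. You instead write the Taylor expansion of $\partial^\ell e$ about $a$ with integral remainder, so all pointwise data sits at $a$ (and is eliminated in one stroke by \eqref{eq:Qinterp}) and the sole contribution at $b$ is the remainder integral, which vanishes by the orthogonality $(\partial^q e,\mathcal{P}_{q-\ell-1})=0$. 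Your argument avoids the induction entirely and makes the role of the degree hypothesis $\mathcal{P}_{2q-\ell-1}\subseteq\mathcal{Z}_\prec$ transparent: it is exactly what is needed to annihilate the Taylor remainder kernel $(b-\cdot)^{q-\ell-1}$. The paper's integration-by-parts route is essentially the same computation unrolled step by step; your packaging is more direct, while the paper's version exposes the intermediate identity \eqref{eq:Q-int_by_parts}, which is not otherwise used.
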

\begin{proof}
We proceed by induction on $\ell$.
We first consider $\ell=q-1$. If we pick $v(x)=x^q/q!$ in \eqref{eq:Qritz} we find that
\begin{align*}
(Q^q_\prec u)^{(q-1)}(b)-(Q^q_\prec u)^{(q-1)}(a) = (\partial^{q} Q^q_\prec u,1) = (\partial^{q}u,1) =  u^{(q-1)}(b)- u^{(q-1)}(a),
\end{align*}
and the result follows from \eqref{eq:Qinterp}. Next we assume that the result is true for $\ell+1,\ell+2,\ldots, q-1$ and consider the case $\ell$. Using integration by parts, $q-\ell-1$ times, we find from \eqref{eq:Qritz} that 
\begin{align}\label{eq:Q-int_by_parts}
(\partial^{\ell+1} Q^q_\prec u,\partial^{2q-\ell-1}v) = (\partial^{\ell+1}u,\partial^{2q-\ell-1}v),
\end{align}
where the boundary terms disappear due to \eqref{eq:Qinterp} and the induction hypothesis. The result now follows by picking $v(x)=x^{2q-\ell-1}/(2q-\ell-1)!$ in \eqref{eq:Q-int_by_parts}.
\end{proof}
In the case $q=1$, the projector $Q^1_\prec$ was already defined in \cite{Takacs:2018} and \cite[Section~8]{Sande:2020}. Its interpolation property was exploited to build globally $C^0$ functions in the context of multi-patch geometries and error estimates were provided for isogeometric multi-patch discretizations. 
In the case $\mathcal{Z}_\prec=\mathcal{P}_{2q-1}$, the projector $Q^q_\prec$ was used in \cite{Buffa:11} to obtain error estimates for spline spaces (with restrictions on the smoothness). We will generalize these results in Section~\ref{sec:spline-Q}; see in particular Example~\ref{ex:buffa}.

Using the classical Aubin--Nitsche duality argument we arrive at the following error estimates for $Q^q_\prec$.
\begin{lemma}\label{lem:Q-L2}
Let $Q_\prec^q$ be the projector defined in \eqref{eq:Qproj}. Then,
\begin{equation*} 
\|u-Q^q_\prec u\| \leq  \Cgen_{\prec-q,q}\|(I-Z_{\prec-q})\partial^q u\|,
\end{equation*}
for all $\prec\geq q$ such that 
$\mathcal{P}_{q-1}\subseteq\mathcal{Z}_{\prec-q}$.
\end{lemma}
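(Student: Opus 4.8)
The plan is to combine the one–sided boundary interpolation \eqref{eq:Qinterp} with the Ritz–type orthogonality \eqref{eq:Qritz} in a duality (Aubin–Nitsche) argument; the only ingredient beyond these two properties is a symmetry of the constant $\Cgen$ under interchanging $K$ and $K^*$, which I would import from the framework of \cite{Sande:2020}.

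Write $e:=u-Q^q_\prec u$ and note that $e\in H^q(a,b)$ since $\prec\geq q$. I would first extract the two facts I need. From the interpolation property \eqref{eq:Qinterp} at the left endpoint, $e^{(\ell)}(a)=0$ for $\ell=0,\ldots,q-1$, and hence repeated integration from the left yields the representation $e=K^q\partial^q e$. From the equivalent form $\partial^qQ^q_\prec=Z_{\prec-q}\partial^q$ of \eqref{eq:Qritz}, I get $\partial^q e=(I-Z_{\prec-q})\partial^q u$; in particular $\partial^q e$ is the $L^2$–residual of $Z_{\prec-q}$ and is therefore orthogonal to $\mathcal{Z}_{\prec-q}$. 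Observe that this already identifies the right–hand side of the claimed bound as $\|\partial^q e\|$.

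The duality step is then short. Substituting $e=K^q\partial^q e$ into one factor of $\|e\|^2$ and passing $K^q$ to the other factor through its adjoint $(K^q)^*=(K^*)^q$, I can use $\partial^q e\perp\mathcal{Z}_{\prec-q}$ to subtract $Z_{\prec-q}(K^*)^q e\in\mathcal{Z}_{\prec-q}$ at no cost, obtaining
\begin{equation*}
\|e\|^2=(K^q\partial^q e,\,e)=(\partial^q e,\,(K^*)^q e)=\bigl(\partial^q e,\,(I-Z_{\prec-q})(K^*)^q e\bigr)\leq\|\partial^q e\|\,\bigl\|(I-Z_{\prec-q})(K^*)^q\bigr\|\,\|e\|
\end{equation*}
by Cauchy–Schwarz and the definition of the operator norm. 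Dividing by $\|e\|$ gives $\|e\|\leq\|(I-Z_{\prec-q})(K^*)^q\|\,\|(I-Z_{\prec-q})\partial^q u\|$.

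The main obstacle, and the last step, is to replace $\|(I-Z_{\prec-q})(K^*)^q\|$ by $\Cgen_{\prec-q,q}=\|(I-Z_{\prec-q})K^q\|$. This is exactly the symmetry of the constant under swapping integration from the left and from the right, the general analogue of the recorded observation $\Cgen_{0,1}=\|(I-Z_0)K\|=\|(I-Z_0)K^*\|$. It holds under the standing assumption \eqref{eq:Xsimpl} together with the hypothesis $\mathcal{P}_{q-1}\subseteq\mathcal{Z}_{\prec-q}$ of the lemma, and I would cite it from \cite{Sande:2020} rather than reprove it; heuristically, it reflects the fact that under the reflection $x\mapsto a+b-x$ the roles of $K$ and $K^*$ are exchanged. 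Granting this identity, the two factors combine into the asserted estimate.
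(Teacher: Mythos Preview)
Your argument is correct and takes a genuinely different route from the paper's. The paper runs a classical Aubin--Nitsche duality: it introduces the auxiliary Dirichlet solution $w$ of $(-1)^q\partial^{2q}w=e$ with $w^{(\ell)}(a)=w^{(\ell)}(b)=0$ for $\ell<q$, integrates by parts $q$ times (killing the boundary terms by means of \emph{both} the left-endpoint interpolation \eqref{eq:Qinterp} and the right-endpoint interpolation of Lemma~\ref{lem:Qinterp}), and then applies Lemma~\ref{lem:L2} to $\partial^q w$. You instead use only the left-endpoint interpolation to represent $e=K^q\partial^q e$, pass $K^q$ to its adjoint, and reduce everything to the identity $\|(I-Z_{\prec-q})(K^*)^q\|=\Cgen_{\prec-q,q}$. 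Your route is shorter and bypasses both the auxiliary boundary-value problem and Lemma~\ref{lem:Qinterp}; the paper's route is the textbook duality argument and recycles Lemma~\ref{lem:Qinterp}, which the paper needs independently.

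One point deserves tightening. The symmetry $\|(I-Z_{\prec-q})(K^*)^q\|=\|(I-Z_{\prec-q})K^q\|$ does hold, but not for the heuristic reason you offer: the space $\mathcal{Z}_{\prec-q}$ need not be invariant under the reflection $x\mapsto a+b-x$ (think of a spline space on a non-symmetric knot sequence). The correct one-line justification uses the hypothesis $\mathcal{P}_{q-1}\subseteq\mathcal{Z}_{\prec-q}$ directly: since $\partial^q K^q f=f$ and $\partial^q(K^*)^q f=(-1)^q f$, the difference $K^q f-(-1)^q(K^*)^q f$ lies in $\mathcal{P}_{q-1}\subseteq\mathcal{Z}_{\prec-q}$, so $(I-Z_{\prec-q})K^q f=(-1)^q(I-Z_{\prec-q})(K^*)^q f$ for every $f$, and the two operator norms coincide. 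With this inserted, your proof is complete and self-contained; there is no need to outsource the step to \cite{Sande:2020}.
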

\begin{proof}
Let $u\in H^q(a,b)$ be given and define $w$ as the solution to the Dirichlet problem
\begin{equation*} 
\begin{aligned}
(-1)^{q}\partial^{2q}w&=u-Q_\prec^qu,
\\
w(a)&=w(b)=\cdots=w^{(q-1)}(a)=w^{(q-1)}(b)=0.
\end{aligned}
\end{equation*}
Using integration by parts, $q$ times, together with \eqref{eq:Qinterp} and Lemma~\ref{lem:Qinterp}, we have
\begin{align*}
\|u-Q^q_\prec u\|^2&=(u-Q^q_\prec u,u-Q^q_\prec u)=(u-Q^q_\prec u,(-1)^{q}\partial^{2q}w)
\\
&=(\partial^q(u-Q^q_\prec u),\partial^{q} w)=((I-Z_{\prec-q})\partial^q u),\partial^{q} (w-v)),
\end{align*}
for any $v\in\mathcal{Z}_\prec$, since $((I-Z_{\prec-q})\partial^q u,\partial^qv)=0$.
Next, using $\|u-Q^q_\prec u\|=\|\partial^{2q}w\|$ together with the Cauchy--Schwarz inequality we obtain
\begin{equation}\label{ineq:AN1}
\|u-Q^q_\prec u\|\,\|\partial^{2q}w\|\leq \|(I-Z_{\prec-q})\partial^q u\|\,\|\partial^q (w-v)\|.
\end{equation}
If we let $v=Q_\prec^qw$, then Lemma~\ref{lem:L2} implies that
\begin{equation}\label{ineq:AN2}
\|\partial^q(w-Q^q_\prec w)\|=\|(I-Z_{\prec -q})\partial^qw\|
\leq \Cgen_{\prec-q,q} \|\partial^{2q}w\|,
\end{equation}
since $\mathcal{P}_{q-1}\subseteq\mathcal{Z}_{\prec-q}$.
Combining \eqref{ineq:AN1} and \eqref{ineq:AN2} completes the proof.
\end{proof}

\begin{theorem}\label{thm:Q}
Let $u\in H^r(a,b)$ be given.
For any $q=0,\ldots,r$, let $Q_\prec^q$ be the projector onto $\mathcal{Z}_\prec$ defined in \eqref{eq:Qproj}. Then, for any $\ell=0,\ldots,q$ we have
\begin{equation*}
\|\partial^{\ell}(u-Q^q_\prec u)\|\leq 
\Cgen_{\prec-q,q-\ell} \Cgen_{\prec-q,r-q}\|\partial^ru\|,
\end{equation*}
for all $\prec\geq q$ such that $\mathcal{P}_{r-q-1}\subseteq\mathcal{Z}_{\prec-q}$ and $\mathcal{P}_{q-\ell-1}\subseteq\mathcal{Z}_{\prec-q}$.
\end{theorem}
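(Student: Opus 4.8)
The plan is to reduce the estimate to the two auxiliary results already at hand, namely the Aubin--Nitsche bound of Lemma~\ref{lem:Q-L2} for $Q^q_\prec$ and the $L^2$-projection bound of Lemma~\ref{lem:L2}, after first peeling off the $\ell$ derivatives by means of the commutation relation $\partial Q_\prec^q=Q_{\prec-1}^{q-1}\partial$.

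First I would iterate the commutation relation. Applying $\partial Q_\prec^q=Q_{\prec-1}^{q-1}\partial$ repeatedly gives $\partial^\ell Q_\prec^q=Q_{\prec-\ell}^{q-\ell}\partial^\ell$ for every $0\le\ell\le q$, and hence
\begin{equation*}
\partial^\ell(u-Q^q_\prec u)=(I-Q_{\prec-\ell}^{q-\ell})\,\partial^\ell u .
\end{equation*}
This moves the whole problem down to estimating the $L^2$-norm of $(I-Q_{\prec-\ell}^{q-\ell})w$ with $w:=\partial^\ell u\in H^{r-\ell}(a,b)$; that is, to the case $\ell=0$ for the shifted data $(\prec,q,r)\mapsto(\prec-\ell,q-\ell,r-\ell)$.

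Next I would apply Lemma~\ref{lem:Q-L2} to the projector $Q_{\prec-\ell}^{q-\ell}$ acting on $w$. With the substitution $\prec\mapsto\prec-\ell$, $q\mapsto q-\ell$ (note $\prec-\ell\ge q-\ell$ since $\prec\ge q$), and using $(\prec-\ell)-(q-\ell)=\prec-q$ together with $\partial^{q-\ell}w=\partial^q u$, the lemma reads
\begin{equation*}
\|(I-Q_{\prec-\ell}^{q-\ell})w\|\le \Cgen_{\prec-q,q-\ell}\,\|(I-Z_{\prec-q})\partial^{q}u\| ,
\end{equation*}
a step licensed precisely by the hypothesis $\mathcal{P}_{q-\ell-1}\subseteq\mathcal{Z}_{\prec-q}$. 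Then I would feed $\partial^q u\in H^{r-q}(a,b)$ into Lemma~\ref{lem:L2} with $\prec\mapsto\prec-q$, $r\mapsto r-q$, which under the hypothesis $\mathcal{P}_{r-q-1}\subseteq\mathcal{Z}_{\prec-q}$ yields
\begin{equation*}
\|(I-Z_{\prec-q})\partial^q u\|\le \Cgen_{\prec-q,r-q}\,\|\partial^{r-q}(\partial^q u)\|=\Cgen_{\prec-q,r-q}\,\|\partial^r u\| .
\end{equation*}
Chaining the last two displays with the first gives the claimed bound.

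The computations here are routine; the genuine point to watch is the index bookkeeping. The crux is to verify that, after the derivative shift, the two polynomial-inclusion requirements demanded by Lemmas~\ref{lem:Q-L2} and~\ref{lem:L2} collapse to exactly $\mathcal{P}_{q-\ell-1}\subseteq\mathcal{Z}_{\prec-q}$ and $\mathcal{P}_{r-q-1}\subseteq\mathcal{Z}_{\prec-q}$, i.e.\ precisely the two hypotheses of the theorem, and that the constant subscripts telescope to $\Cgen_{\prec-q,q-\ell}$ and $\Cgen_{\prec-q,r-q}$. No new estimate is needed beyond the commutation identity and these two lemmas.
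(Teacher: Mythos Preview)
Your proposal is correct and follows essentially the same route as the paper: iterate the commutation identity to obtain $\partial^{\ell}(u-Q^q_\prec u)=(I-Q^{q-\ell}_{\prec-\ell})\partial^{\ell}u$, apply Lemma~\ref{lem:Q-L2} (under $\mathcal{P}_{q-\ell-1}\subseteq\mathcal{Z}_{\prec-q}$) to reach $\Cgen_{\prec-q,q-\ell}\|(I-Z_{\prec-q})\partial^q u\|$, and then finish with Lemma~\ref{lem:L2} (under $\mathcal{P}_{r-q-1}\subseteq\mathcal{Z}_{\prec-q}$). Your index bookkeeping is exactly right and matches the paper's.
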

\begin{proof}
Using the commuting property  $\partial^{\ell}Q^q_\prec=Q^{q-\ell}_{\prec-\ell}\partial^{\ell}$ together with Lemma \ref{lem:Q-L2} we obtain
\begin{align*}
\|\partial^{\ell}(u-Q^q_\prec u)\|&=\|(I-Q^{q-\ell}_{\prec-\ell})\partial^{\ell}u\|\leq \Cgen_{\prec-q,q-\ell}\|(I-Z_{\prec-q})\partial^q u\|,
\end{align*}
since $\mathcal{P}_{q-\ell-1}\subseteq\mathcal{Z}_{\prec-q}$. The result now follows from Lemma \ref{lem:L2} since $\mathcal{P}_{r-q-1}\subseteq\mathcal{Z}_{\prec-q}$.
\end{proof}

In \cite[Section~3.1]{Sande:2019} a closely related sequence of projection operators were studied. Let $\widetilde{Q}_\prec^q:H^q(a,b)\to \mathcal{Z}_{\prec}$, for $q=0,\ldots,\prec$, be defined by $\widetilde{Q}_\prec^0:=Z_{\prec}$ and
\begin{equation*}
\widetilde{Q}_\prec^qu := c(u) + KQ_{\prec-1}^{q-1}\partial u,
\end{equation*}
where $c(u)\in\RR$ is chosen such that $(\widetilde{Q}_\prec^q u,1)=(u,1)$. In the case of $\mathcal{Z}_{\prec}$ being a spline space, the projector $\widetilde{Q}_\prec^1$ was also studied in \cite{Takacs:2016}. In both works \cite{Sande:2019,Takacs:2016}, only maximally smooth splines were considered; here we consider a more general context that allows for splines of any smoothness.
\begin{proposition}\label{pro:otherQproj}
If $\mathcal{P}_{2q-\ell}\subseteq\mathcal{Z}_\prec$ then the projector $Q_\prec^q$ in \eqref{eq:Qproj} satisfies $(\partial^{\ell}Q^q_\prec u,1) = (\partial^{\ell}u,1)$ for $\ell=0,\ldots,q$. Consequently, if $\mathcal{P}_{2q}\subseteq\mathcal{Z}_\prec$ then $Q_\prec^q=\widetilde{Q}_\prec^q$. 
\end{proposition}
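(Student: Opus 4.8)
The plan is to derive each identity $(\partial^{\ell}Q^q_\prec u,1)=(\partial^{\ell}u,1)$ directly from the Ritz-type orthogonality \eqref{eq:Qritz} by testing against a single well-chosen monomial and integrating by parts. Write $e:=Q^q_\prec u-u$ for the error. Since $\prec\geq q$ one checks inductively that $\mathcal{Z}_\prec\subseteq H^\prec(a,b)\subseteq H^q(a,b)$, so $e\in H^q(a,b)$ and all the derivatives and endpoint evaluations used below are meaningful. Equation \eqref{eq:Qritz} says precisely that $(\partial^q e,\partial^q v)=0$ for every $v\in\mathcal{Z}_\prec$, and the goal is to rewrite $(\partial^{\ell}e,1)$ in this form.

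For a fixed $\ell\in\{0,\ldots,q\}$ I would take the test function $v(x):=x^{2q-\ell}/(2q-\ell)!$. By the hypothesis $\mathcal{P}_{2q-\ell}\subseteq\mathcal{Z}_\prec$ this $v$ is admissible in \eqref{eq:Qritz}, so $(\partial^q e,\partial^q v)=0$; moreover $\partial^{2q-\ell}v=1$. Integrating by parts $q-\ell$ times then moves $q-\ell$ derivatives from $e$ onto $v$ and gives
\[
0=(\partial^q e,\partial^q v)=(-1)^{q-\ell}(\partial^{\ell}e,1)+B,
\]
where $B$ collects the boundary contributions. Hence $(\partial^{\ell}e,1)=0$, which is the desired identity, provided $B=0$. (For $\ell=q$ no integration by parts is needed and the conclusion is immediate; for $\ell\geq 1$ one could instead argue directly from the fundamental theorem of calculus, since $(\partial^{\ell}e,1)=e^{(\ell-1)}(b)-e^{(\ell-1)}(a)$.)

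The only delicate point, and the step I expect to be the main obstacle, is verifying that $B=0$. The term $B$ is a linear combination of the endpoint values $(\partial^{m}e)(a)$ and $(\partial^{m}e)(b)$ for $m=\ell,\ldots,q-1$. At the left endpoint these vanish for all $m\leq q-1$ by the interpolation property \eqref{eq:Qinterp}. At the right endpoint I would invoke Lemma~\ref{lem:Qinterp}: the hypothesis $\mathcal{P}_{2q-\ell}\subseteq\mathcal{Z}_\prec$ in particular yields $\mathcal{P}_{2q-\ell-1}\subseteq\mathcal{Z}_\prec$, which is exactly the inclusion ensuring $(\partial^{m}e)(b)=0$ for $m=\ell,\ldots,q-1$. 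Thus the single hypothesis $\mathcal{P}_{2q-\ell}\subseteq\mathcal{Z}_\prec$ does double duty, making $v$ admissible and simultaneously annihilating the boundary terms; this proves the first claim. For the consequence, observe that $Q^q_\prec u$ and $\widetilde{Q}^q_\prec u$ share the summand $KQ^{q-1}_{\prec-1}\partial u$ and differ only by the constant $c(u)-u(a)$. When $\mathcal{P}_{2q}\subseteq\mathcal{Z}_\prec$, the case $\ell=0$ just established gives $(Q^q_\prec u,1)=(u,1)=(\widetilde{Q}^q_\prec u,1)$, so the integral over $(a,b)$ of this constant difference is zero; since $b-a>0$ this forces $c(u)=u(a)$, whence $Q^q_\prec u=\widetilde{Q}^q_\prec u$.
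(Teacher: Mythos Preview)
Your proof is correct and follows essentially the same approach as the paper. The only organizational difference is that the paper splits into two cases: for $\ell\geq 1$ it uses the fundamental theorem of calculus directly (the alternative you mention in parentheses), while for $\ell=0$ it carries out exactly your integration-by-parts argument with $v(x)=x^{2q}/(2q)!$; your unified treatment via $v(x)=x^{2q-\ell}/(2q-\ell)!$ for all $\ell$ is equally valid and rests on the same ingredients, namely \eqref{eq:Qritz}, \eqref{eq:Qinterp}, and Lemma~\ref{lem:Qinterp}.
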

\begin{proof}
We first consider the case $\ell>0$. 
Since $\mathcal{P}_{2q-\ell}\subseteq\mathcal{Z}_\prec$ it follows from \eqref{eq:Qinterp} and Lemma~\ref{lem:Qinterp} that
\begin{equation*}
(\partial^{\ell}Q^q_\prec u,1) =(Q^q_\prec u)^{(\ell-1)}(b)-(Q^q_\prec u)^{(\ell-1)}(a) =  u^{(\ell-1)}(b)- u^{(\ell-1)}(a) = (\partial^{\ell}u,1).
\end{equation*}
If $\ell=0$ then, using integration by parts together with \eqref{eq:Qinterp} and Lemma~\ref{lem:Qinterp}, we have
\begin{equation*}
(u-Q^q_\prec u,1) = \frac{(-1)^q}{q!}(\partial^q(u-Q^q_\prec u),x^q) = 0,
\end{equation*}
since $x^{2q}\in\mathcal{Z}_\prec$ and $Q^q_\prec$ satisfies \eqref{eq:Qritz}.
\end{proof}

The degree of the polynomial inclusion in Proposition~\ref{pro:otherQproj} is sharp as illustrated in the following example.
\begin{example}\label{ex:otherQproj}
Let $q=1$ and $[a,b]=[0,1]$. Then, for $\mathcal{Z}_1=\mathcal{P}_1$ ($t=1$) we have
\begin{equation*}
Q^1_1u(x) = u(0) + \int_0^x Z_0\partial u(y)dy = u(0)+x(u(1)-u(0)).
\end{equation*}
It is clear that in general $(Q^1_1u,1)\neq(u,1)$ and thus $Q^1_1$ is different from $\widetilde{Q}^1_1$. On the other hand, for $\mathcal{Z}_2=\mathcal{P}_2$ ($t=2$) we have
\begin{align*}
Q^1_2u(x) &= u(0) + \int_0^x Z_1\partial u(y)dy\\
&=u(0) -2x \left[u(1)+2u(0)-3(u,1)\right] +3x^2 \left[u(1)+u(0)-2(u,1)\right],
\end{align*}
which satisfies $(Q^1_2u,1)=(u,1)$ and thus $Q^1_2$ equals $\widetilde{Q}^1_2$.
\end{example}

We observe that the polynomial condition $\mathcal{P}_{2q-\ell}\subseteq\mathcal{Z}_\prec$ in Proposition~\ref{pro:otherQproj} is slightly more restrictive than the polynomial condition $\mathcal{P}_{2q-\ell-1}\subseteq\mathcal{Z}_{\prec}$ (which is equivalent to $\mathcal{P}_{q-\ell-1}\subseteq\mathcal{Z}_{\prec-q}$) required in Theorem~\ref{thm:Q}. Hence, when $\mathcal{P}_{2q}\subseteq\mathcal{Z}_\prec$, the explicit error estimates in the theorem are also valid for the projector $\widetilde{Q}_\prec^q$ for all $\ell=0,\ldots,q$. This extends the results in \cite{Sande:2019} that only provides error estimates for $\ell=q-1,q$, and only covers maximally smooth spline spaces. 

\section{Relation to the Ritz projection}\label{sec:Ritz}
In this section we focus on the Ritz projector $R^q_\prec$ considered in \cite{Sande:2020}, and investigate the relation with our projector $Q^q_\prec$.
We recall from \cite[Eq.~(13)]{Sande:2020} that $R_\prec^q: H^q(a,b)\to \mathcal{Z}_\prec$, for $q=0,\ldots,\prec$, is defined by
\begin{equation}\label{eq:Rproj}
\begin{aligned}
(\partial^{q}R_\prec^q u,\partial^qv) &= (\partial^q u, \partial^q v), \quad &&\forall v\in \mathcal{Z}_\prec,
\\
(R_\prec^qu,g)&=(u,g), &&\forall g\in \mathcal{P}_{q-1}.
\end{aligned}
\end{equation}
We first show that the difference between $Q^q_\prec u$ and $R^q_\prec u$ is a polynomial of at most degree $q-1$. 
\begin{proposition}\label{pro:E}
Let $Q^q_\prec$ and $R^q_\prec$ be the projectors defined in \eqref{eq:Qproj} and \eqref{eq:Rproj}, respectively. Let $P_{q-1}$ be the $L^2$-projector onto $\mathcal{P}_{q-1}$. Then,
\begin{equation*}
  E_{q-1}u  :=  R^q_\prec u - Q^q_\prec u = P_{q-1}(u - Q^q_\prec u)\in \mathcal{P}_{q-1}.
\end{equation*}
\end{proposition}
\begin{proof}
Let $g_e:=P_{q-1}(u - Q^q_\prec u)$ and $r_e:=g_e+Q^q_\prec u$.
Then, from \eqref{eq:Qritz} we obtain
\begin{equation*}
(\partial^{q}r_e,\partial^qv) = (\partial^{q}g_e+\partial^{q}Q^q_\prec u,\partial^qv)=(\partial^q u, \partial^q v), \quad \forall v\in \mathcal{Z}_\prec,
\end{equation*}
and by the definition of $g_e$ we have
\begin{equation*}
  (r_e,g)=(g_e+Q^q_\prec u-u+u,g)=(u,g), \quad \forall g\in \mathcal{P}_{q-1}.
\end{equation*}
Hence, we conclude that $R^q_\prec u=r_e$ and $E_{q-1}u=g_e$.
\end{proof}
Observe that Proposition~\ref{pro:E} implies that
\begin{equation*}
\|u - R^q_\prec u\|^2=\|u - Q^q_\prec u\|^2-\|E_{q-1} u\|^2,
\end{equation*}
and hence
\begin{equation}\label{eq:R-Q-ineq}
\|u - R^q_\prec u\|\leq\|u - Q^q_\prec u\|.
\end{equation}
It is also clear that 
\begin{equation}\label{eq:R-Q-diffq-ineq}
\|\partial^{q}(u - R^q_\prec u)\|=\|\partial^{q}(u - Q^q_\prec u)\|.
\end{equation}
\begin{example}
Let $q=1$ and $r\geq1$. Then, for $u\in H^r(a,b)$ and $\ell=0,1$ we have
\begin{equation*}
\|\partial^{\ell}(u-R^1_\prec u)\|\leq \|\partial^{\ell}(u-Q^1_\prec u)\|\leq 
\Cgen_{\prec-1,1-\ell} \Cgen_{\prec-1,r-1}\|\partial^ru\|,
\end{equation*}
for all $\prec\geq 1$ such that $\mathcal{P}_{r-2}\subseteq\mathcal{Z}_{\prec-1}$.
This follows immediately from \eqref{eq:R-Q-ineq}--\eqref{eq:R-Q-diffq-ineq} and Theorem~\ref{thm:Q}.
\end{example}

In general, the projectors $Q^q_\prec$ and $R^q_\prec$ are different, but they coincide when polynomials of high enough degree are in the space $\mathcal{Z}_\prec$.

\begin{proposition}\label{pro:R-Q}
Let $Q^q_\prec$ and $R^q_\prec$ be the projectors defined in \eqref{eq:Qproj} and \eqref{eq:Rproj}, respectively. 
If $\mathcal{P}_{2q+i}\subseteq\mathcal{Z}_\prec$ then $(u-Q^q_\prec u,x^{i})=0$ for $i=0,\ldots,q-1$. Consequently, if $\mathcal{P}_{3q-1}\subseteq\mathcal{Z}_\prec$ then $Q^q_\prec=R^q_\prec$.
\end{proposition}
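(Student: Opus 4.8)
The plan is to establish the first claim by a double integration-by-parts argument and then read off the second claim from Proposition~\ref{pro:E}. Throughout write $e:=u-Q^q_\prec u$. Since $\mathcal{P}_{2q+i}\subseteq\mathcal{Z}_\prec$ with $i\geq0$ in particular forces $\mathcal{P}_{2q-1}\subseteq\mathcal{Z}_\prec$, Lemma~\ref{lem:Qinterp} applies for every $\ell=0,\ldots,q-1$, and together with \eqref{eq:Qinterp} it yields the two-sided Hermite vanishing $e^{(\ell)}(a)=e^{(\ell)}(b)=0$ for $\ell=0,\ldots,q-1$. This is the fact I will lean on to annihilate the boundary contributions.

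First I would rewrite the monomial as a $q$-th derivative, $x^i=\partial^q\bigl(\tfrac{i!}{(q+i)!}x^{q+i}\bigr)$, and integrate by parts $q$ times in $(e,x^i)$. Each integration by parts produces a boundary term of the form $\pm[\partial^j e\cdot(\cdot)]_a^b$ with $0\leq j\leq q-1$, and by the two-sided vanishing above every such term is zero; what survives is
\begin{equation*}
(e,x^i)=(-1)^q\frac{i!}{(q+i)!}(\partial^q e,x^{q+i}).
\end{equation*}
So it suffices to show $(\partial^q e,x^{q+i})=0$.

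For this I would again write the monomial as a $q$-th derivative, $x^{q+i}=\partial^q w$ with $w:=\tfrac{(q+i)!}{(2q+i)!}x^{2q+i}\in\mathcal{P}_{2q+i}$. The hypothesis $\mathcal{P}_{2q+i}\subseteq\mathcal{Z}_\prec$ places $w$ in $\mathcal{Z}_\prec$, so the Galerkin-type identity \eqref{eq:Qritz} characterizing $Q^q_\prec$ gives $(\partial^q e,\partial^q w)=(\partial^q u-\partial^q Q^q_\prec u,\partial^q w)=0$. Hence $(\partial^q e,x^{q+i})=0$ and therefore $(e,x^i)=0$, proving the first claim. For the consequence, note that $\mathcal{P}_{3q-1}=\mathcal{P}_{2q+(q-1)}\subseteq\mathcal{Z}_\prec$ supplies the inclusion $\mathcal{P}_{2q+i}\subseteq\mathcal{Z}_\prec$ for every $i=0,\ldots,q-1$; the first claim then gives $u-Q^q_\prec u\perp\mathcal{P}_{q-1}$, i.e.\ $P_{q-1}(u-Q^q_\prec u)=0$. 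By Proposition~\ref{pro:E} this quantity is exactly $E_{q-1}u=R^q_\prec u-Q^q_\prec u$, so $R^q_\prec u=Q^q_\prec u$ for every $u$, whence $Q^q_\prec=R^q_\prec$.

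I expect the only delicate point to be the bookkeeping in the repeated integration by parts: one must confirm that all $q$ boundary terms genuinely involve derivatives of $e$ of order at most $q-1$ (never order $q$ or higher), so that the two-sided Hermite interpolation---and in particular the endpoint interpolation at $b$ furnished by Lemma~\ref{lem:Qinterp}, which is precisely why the polynomial inclusion is invoked---suffices to make them vanish. Everything else is a clean combination of the commuting/Galerkin relation \eqref{eq:Qritz} with the algebraic identities for antiderivatives of monomials.
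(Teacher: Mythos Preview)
Your argument is correct and is essentially the paper's own proof, just spelled out in more detail: one $q$-fold integration by parts (boundary terms killed by \eqref{eq:Qinterp} and Lemma~\ref{lem:Qinterp}) to reach $(-1)^q\frac{i!}{(q+i)!}(\partial^q e,x^{q+i})$, then the Galerkin identity \eqref{eq:Qritz} with the test function $w=\frac{(q+i)!}{(2q+i)!}x^{2q+i}\in\mathcal{Z}_\prec$ to conclude; your use of Proposition~\ref{pro:E} for the ``consequently'' part makes explicit what the paper leaves implicit. (Minor wording point: calling it a ``double integration-by-parts'' is a slight misnomer---the second step is a direct application of \eqref{eq:Qritz}, not another integration by parts.)
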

\begin{proof}
Using integration by parts together with \eqref{eq:Qinterp} and Lemma~\ref{lem:Qinterp}, we have
\begin{align*}
(u-Q^q_\prec u,x^i) =\frac{(-1)^q i!}{(q+i)!}(\partial^q(u-Q^q_\prec u), x^{q+i}) = 0,
\end{align*}
since $Q^q_\prec$ satisfies \eqref{eq:Qritz} and $x^{2q+i}\in\mathcal{Z}_\prec$ for all $i\in\{0,\ldots,q-1\}$.
\end{proof}

The degree of the polynomial inclusion in Proposition~\ref{pro:R-Q} is sharp as illustrated in the following example.
\begin{example}\label{ex:R-Q}
Let $q=2$ and $[a,b]=[0,1]$. We choose $u(x)=x^6$.
Then, for $\mathcal{Z}_2=\mathcal{P}_2$ ($t=2$) we have
\begin{equation*}
  Q^2_2 u(x) = 3x^2, \quad
  R^2_2 u(x) = Q^2_2 u(x) - \frac{33}{14}x + \frac{9}{28};
\end{equation*}
and for $\mathcal{Z}_3=\mathcal{P}_3$ ($t=3$) we have
\begin{equation*}
  Q^2_3 u(x) = 4x^3 - 3x^2, \quad
  R^2_3 u(x) = Q^2_3 u(x) + \frac{3}{70}x + \frac{17}{140};
\end{equation*}
and for $\mathcal{Z}_4=\mathcal{P}_4$ ($t=4$) we have
\begin{equation*}
  Q^2_4 u(x) = \frac{30}{7}x^4 - \frac{32}{7}x^3 + \frac{9}{7}x^2, \quad
  R^2_4 u(x) = Q^2_4 u(x) + \frac{3}{70}x - \frac{3}{140};
\end{equation*}
and for $\mathcal{Z}_5=\mathcal{P}_5$ ($t=5$) we have
\begin{equation*}
  Q^2_5 u(x) = R^2_5 u(x) = 3x^5 - \frac{45}{14}x^4 + \frac{10}{7}x^3 - \frac{3}{14}x^2.
\end{equation*}
Note that $Q^2_tu$ is interpolating $u$ at $b=1$ only for $t\geq3$, in accordance with Lemma~\ref{lem:Qinterp}.
\end{example}

One can ask the question of whether the Ritz projector $R^q_\prec$ satisfies other boundary conditions given a polynomial inclusion $\mathcal{P}_{p}\subseteq\mathcal{Z}_\prec$ of degree $p<3q-1$. We consider this problem in the next example.

\begin{example}
Let $q=2$ and define $f(x) := u(x) - R^2_\prec u(x)$. Using integration by parts twice we have
\begin{equation*}
0=(\partial^2f,\partial^2v) 
=(f,\partial^4v) + \left[\partial f \partial^2 v\right]_a^b - \left[f \partial^3 v\right]_a^b, \quad v\in\mathcal{Z}_\prec.
\end{equation*}
We now pick $v(x)=x^s/s!$ for $s=2,3,4$ to obtain
\begin{align*}
\left[f'(x) \right]_a^b&= 0, 
\\
\left[ f'(x) x\right]_a^b - \left[f(x) \right]_a^b&= 0,
\\
\left[f'(x) x^2/2\right]_a^b - \left[f(x) x\right]_a^b&= 0,
\end{align*}
where we have used that $(f,1)=0$ in the last equation. This implies that
\begin{alignat*}{3}
f'(b)&= f'(a), \quad &\mathcal{P}_{2}\subseteq\mathcal{Z}_\prec,
\\
f'(b) b-f'(a)a &= f(b)-f(a), \quad &\mathcal{P}_{3}\subseteq\mathcal{Z}_\prec,
\\
f'(b)b^2-f'(a)a^2  &=2\left(f(b)b-f(a)a\right), \quad &\mathcal{P}_{4}\subseteq\mathcal{Z}_\prec.
\end{alignat*}
By combining the above conditions we also deduce
\begin{alignat*}{3}
f'(b) &= \frac{f(b)-f(a)}{b-a}, \quad &\mathcal{P}_{3}\subseteq\mathcal{Z}_\prec,
\\
f(b) &= -f(a), \quad &\mathcal{P}_{4}\subseteq\mathcal{Z}_\prec.
\end{alignat*}
Thus, if $\mathcal{P}_{2}\subseteq\mathcal{Z}_\prec$, then we have
$$(R^2_\prec u)'(b)-(R^2_\prec u)'(a) = u'(b)-u'(a).$$
If $\mathcal{P}_{3}\subseteq\mathcal{Z}_\prec$, then we also have
$$(R^2_\prec u)'(b) - \frac{R^2_\prec u(b)-R^2_\prec u(a)}{b-a} = u'(b) - \frac{u(b)-u(a)}{b-a}.$$
Finally, if $\mathcal{P}_{4}\subseteq\mathcal{Z}_\prec$, then we also have
$$R^2_\prec u(b) + R^2_\prec u(a) = u(b) + u(a).$$
These conditions can be verified with the function and the Ritz projectors considered in Example~\ref{ex:R-Q}.
We may conclude that the Ritz projector $R^2_\prec$
satisfies some type of boundary relations for various polynomial inclusions, but they do not seem very useful in practice unless the polynomial degree is high enough to be covered by Proposition \ref{pro:R-Q}.
\end{example}

To complement the result in Proposition~\ref{pro:R-Q}, we also provide estimates for the difference between $Q^q_\prec$ and $R^q_\prec$ in general. 
We start with the following inverse inequality (see \cite{Goetgheluck:90} for details and other similar results).
\begin{lemma}\label{lem:inv-ineq-poly}
If $g\in \mathcal{P}_{p}$ then we have
\begin{align*}
\|\partial g\| \leq \frac{d_p}{b-a}\|g\|, \quad d_p:=\sqrt{\frac{p(p+1)(p+2)(p+3)}{2}}.
\end{align*}
\end{lemma}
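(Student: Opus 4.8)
The plan is to pass to a reference interval and diagonalize the problem in the Legendre basis, reducing the inequality to a bound on the largest eigenvalue of the Gram matrix of the derivatives of the Legendre polynomials.

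First I would exploit affine covariance. Mapping $[-1,1]$ onto $[a,b]$ by $x=\tfrac{a+b}{2}+\tfrac{b-a}{2}\xi$ and tracking how $\|g\|$ and $\|\partial g\|$ transform shows that the optimal constant on $[a,b]$ equals $\tfrac{2}{b-a}$ times the optimal constant on $[-1,1]$. It therefore suffices to prove $\|\partial g\|\le\tfrac12 d_p\,\|g\|$ for $g\in\calP_p$ in the $L^2(-1,1)$ norm.

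Next I would expand $g=\sum_{n=0}^p c_n\widetilde L_n$ in the $L^2(-1,1)$-orthonormal Legendre basis $\widetilde L_n=\sqrt{(2n+1)/2}\,L_n$, so that $\|g\|^2=\sum_n c_n^2$ and $\|\partial g\|^2=\sum_{m,n}A_{mn}c_mc_n$ with $A_{mn}:=(\partial\widetilde L_m,\partial\widetilde L_n)$. The reduced claim then becomes the eigenvalue bound
\[
\lambda_{\max}(A)\le \frac{p(p+1)(p+2)(p+3)}{8}.
\]
Two standard facts feed this estimate: the classical expansion $\partial L_n=\sum_k(2k+1)L_k$, summed over $k=n-1,n-3,\dots$, which together with $\|L_k\|^2=2/(2k+1)$ gives $\|\partial\widetilde L_n\|^2=\tfrac{(2n+1)n(n+1)}{2}$; and the parity of the Legendre polynomials, which forces $A_{mn}=0$ whenever $m\not\equiv n\pmod 2$. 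Hence $A$ splits as a direct sum $A_{\mathrm{even}}\oplus A_{\mathrm{odd}}$, and $\lambda_{\max}(A)=\max\bigl(\lambda_{\max}(A_{\mathrm{even}}),\lambda_{\max}(A_{\mathrm{odd}})\bigr)$. Each block is positive semidefinite, so its largest eigenvalue is at most its trace; since $\|\partial\widetilde L_n\|^2$ increases with $n$ and the top degree $p$ lies in its own parity class, the larger of the two traces is $\sum_{0\le n\le p,\,n\equiv p\,(2)}\|\partial\widetilde L_n\|^2$, which a short induction in steps of two identifies with $\tfrac{p(p+1)(p+2)(p+3)}{8}$. Rescaling back to $[a,b]$ reproduces exactly the constant $d_p/(b-a)$.

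The one load-bearing step is the parity decoupling of $A$. Applying the crude bound ``largest eigenvalue $\le$ trace'' to the whole matrix would only give $\tfrac{p(p+1)^2(p+2)}{4}$ on $[-1,1]$, which exceeds the target for $p\ge2$; it is the splitting into even and odd blocks that sharpens the trace bound to the advertised value (and, incidentally, makes it sharp for $p=1,2$). The remaining ingredients—the derivative formula, the value of $\|\partial\widetilde L_n\|^2$, and the telescoping sum—are routine, and one could alternatively quote the sharp $L^2$ Markov constants from \cite{Goetgheluck:90} to bypass them.
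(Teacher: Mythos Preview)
Your argument is correct. The affine rescaling, the Legendre expansion, the parity splitting of the Gram matrix $A$, the trace bound on each block, and the closed-form evaluation of the dominant trace $\sum_{n\equiv p\,(2)}\tfrac{n(n+1)(2n+1)}{2}=\tfrac{p(p+1)(p+2)(p+3)}{8}$ all check out and combine to give exactly $d_p/(b-a)$.

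As for comparison with the paper: there is nothing to compare. The paper does not supply a proof of this lemma at all; it simply states the inequality and refers to \cite{Goetgheluck:90} for details. Your write-up is therefore strictly more than what the paper offers here. The Legendre-basis route you take is in fact the standard one behind this $L^2$ Markov-type inequality (and is essentially the argument one finds in the cited literature), so you are not departing from the intended approach---you are filling it in. Your closing remark that the constant is sharp only for $p=1,2$ is also accurate: for larger $p$ the trace bound on the parity block overshoots the true largest eigenvalue, but the stated $d_p$ is precisely the trace-based constant, which is all the lemma claims.
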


\begin{proposition}\label{pro:R-Q-bound}
Let $Q^q_\prec$ and $R^q_\prec$ be the projectors defined in \eqref{eq:Qproj} and \eqref{eq:Rproj}, respectively. We have
\begin{equation*}
  \|R^q_\prec u - Q^q_\prec u\| \leq \|u - Q^q_\prec u\|.
\end{equation*}
Furthermore, for $\ell=1,\ldots,q-1$, we have
\begin{equation*}
  \|\partial^\ell(R^q_\prec u - Q^q_\prec u)\| \leq \left(\frac{1}{b-a}\right)^{\ell}\left(\prod_{i=q-\ell}^{q-1} d_i\right)\|R^q_\prec u - Q^q_\prec u\|.
\end{equation*}
Finally, for any $\ell\geq q$, we have
\begin{equation*}
  \|\partial^\ell(R^q_\prec u - Q^q_\prec u)\|=0.
\end{equation*}
\end{proposition}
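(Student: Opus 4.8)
The plan is to handle the three estimates separately, each resting on the structural identity from Proposition~\ref{pro:E}: writing $g := R^q_\prec u - Q^q_\prec u$, we have $g = P_{q-1}(u - Q^q_\prec u)$ and in particular $g \in \mathcal{P}_{q-1}$.

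For the first inequality I would invoke only that $P_{q-1}$, being the $L^2$-orthogonal projector onto $\mathcal{P}_{q-1}$, is a contraction in the $L^2$-norm. Applying it to $u - Q^q_\prec u$ gives $\|g\| = \|P_{q-1}(u - Q^q_\prec u)\| \leq \|u - Q^q_\prec u\|$ at once. The third statement is equally direct: since $g \in \mathcal{P}_{q-1}$, every derivative of order $q$ or higher vanishes identically, so $\partial^\ell g = 0$ for all $\ell \geq q$.

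The middle estimate is the only one carrying content, and the idea is to apply the polynomial inverse inequality of Lemma~\ref{lem:inv-ineq-poly} repeatedly, peeling off one derivative at a time. I would argue by induction on the order of differentiation (equivalently, by telescoping a chain of bounds). At the $j$-th step the function $\partial^{j-1}g$ is a polynomial of degree at most $q-j$, so Lemma~\ref{lem:inv-ineq-poly} with $p = q-j$ yields $\|\partial^j g\| \leq \frac{d_{q-j}}{b-a}\,\|\partial^{j-1}g\|$. Multiplying these inequalities for $j = 1, \ldots, \ell$ telescopes to $\|\partial^\ell g\| \leq (b-a)^{-\ell}\bigl(\prod_{j=1}^\ell d_{q-j}\bigr)\|g\|$, and reindexing $\prod_{j=1}^\ell d_{q-j} = \prod_{i=q-\ell}^{q-1} d_i$ recovers exactly the claimed bound.

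The only delicate point, and hence the main thing to watch, is the degree bookkeeping: at each differentiation the degree drops by one, so the correct index fed to the inverse inequality is $q-j$ rather than a fixed $q-1$. Because the admissible range $\ell = 1, \ldots, q-1$ keeps every such index at least $1$, the degenerate value $d_0 = 0$ never enters; this is consistent with the fact that the derivatives which actually vanish are precisely those of order $\geq q$, already accounted for by the third statement.
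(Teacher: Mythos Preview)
Your proof is correct and follows exactly the approach of the paper: invoke Proposition~\ref{pro:E} to identify $R^q_\prec u - Q^q_\prec u$ with $P_{q-1}(u-Q^q_\prec u)\in\mathcal{P}_{q-1}$, use the contractivity of $P_{q-1}$ for the first bound, and apply Lemma~\ref{lem:inv-ineq-poly} repeatedly for the derivative estimate. You have simply made explicit the degree bookkeeping in the telescoping product that the paper compresses into the phrase ``a repeated application of Lemma~\ref{lem:inv-ineq-poly}.''
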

\begin{proof}
This follows immediately from Proposition~\ref{pro:E}, $\|P_{q-1}(u - Q^q_\prec u)\|\leq\|u - Q^q_\prec u\|$, and a repeated application of Lemma~\ref{lem:inv-ineq-poly}.
\end{proof}

Let $u\in H^r(a,b)$ be given. The results in 
Propositions~\ref{pro:R-Q} and~\ref{pro:R-Q-bound} can be combined with Theorem~\ref{thm:Q} to achieve an estimate for (derivatives of) the difference between $R_\prec^q u$ and $Q^q_\prec u$. In particular, we have
\begin{equation}\label{eq:R-error}
\|R^q_\prec u - Q^q_\prec u\|
\leq \Cgen_{\prec-q,q}\Cgen_{\prec-q,r-q} \|\partial^ru\|,
\end{equation}
and
\begin{equation}\label{eq:R-error-l}
\|\partial^\ell(R^q_\prec u - Q^q_\prec u)\|
\leq \Cgen_{\prec-q,q}\Cgen_{\prec-q,r-q}\left(\frac{1}{b-a}\right)^{\ell}\left(\prod_{i=q-\ell}^{q-1} d_i\right) \|\partial^ru\|, \quad 1\leq\ell< q,
\end{equation}
for all $\prec\geq q$ such that $\mathcal{P}_{r-q-1}\subseteq\mathcal{Z}_{\prec-q}$ and $\mathcal{P}_{q-1}\subseteq\mathcal{Z}_{\prec-q}$. 

\section{Spline spaces}\label{sec:spline}
For $k\geq0$, let $C^k[a,b]$ be the classical space of functions with continuous derivatives of order $0,1,\ldots,k$ on the interval $[a,b]$.
We further let $C^{-1}[a,b]$ denote the space of bounded, piecewise continuous functions on $[a,b]$ that are discontinuous only at a finite number of points.

Suppose $\knots:= (\xi_0,\ldots,\xi_{\nknots+1})$ is a sequence of (break) points
such that
\begin{equation*}
a=:\xi_0 < \xi_1 < \cdots < \xi_{\nknots} < \xi_{\nknots+1}:= b,
\end{equation*}
and let
\begin{equation*}
h:=\max_{j=0,\ldots,\nknots} (\xi_{j+1}-\xi_j).
\end{equation*}
Moreover, set $I_j := [\xi_j,\xi_{j+1})$,
$j=0,1,\ldots,\nknots-1$, and $I_\nknots := [\xi_\nknots,\xi_{\nknots+1}]$.
Then, for $-1\leq k\leq p-1$, we define the space $\mathcal{S}^k_{p,\knots}$ of splines of degree $p$ and smoothness $k$ by
\begin{equation*}
\mathcal{S}^k_{p,\knots} := \{s \in C^{k}[a,b] : s|_{I_j} \in \calP_p,\, j=0,1,\ldots,\nknots \}.
\end{equation*}
With a slight misuse of terminology, we will refer to $\knots$ as knot sequence and to its elements as knots.
We use the notation $S_p^k: L^2(a,b)\to \mathcal{S}_{p,\knots}^k$ 
for the $L^2$-projector onto spline spaces.

Define now the constant $c_{p,k,r}$ for $p\geq r-1$ as follows. 
If $k=p-1$, we let
\begin{equation*}
c_{p,p-1,r}:=\left(\frac{1}{\pi}\right)^r,
\end{equation*} 
and if $k\leq p-2$, we~let 
\begin{equation*}
c_{p,k,r}:=\left(\frac{1}{2}\right)^{r}\begin{cases}
\left(\dfrac{1}{\sqrt{(p-k)(p-k+1)}}\right)^r, &k\geq r-2,\\[0.5cm]
\left(\dfrac{1}{\sqrt{(p-k)(p-k+1)}}\right)^{k+1}\sqrt{\dfrac{(p+1-r)!}{(p-1+r-2k)!}}, & k<r-2.
\end{cases}
\end{equation*}
The next result was then shown in \cite[Theorem~3]{Sande:2020}.
\begin{theorem}\label{thm:low-order}
Let $u\in H^r(a,b)$ be given. For any knot sequence $\knots$, let $S^k_p$ be the $L^2$-projector onto $\mathcal{S}^k_{p,\knots}$ for $-1\leq k\leq p-1$. Then,
\begin{equation*}
\|u-S^k_pu\|\leq c_{p,k,r}h^r\|\partial^ru\|,
\end{equation*}
for all $p\geq r-1$.
\end{theorem}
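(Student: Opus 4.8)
The plan is to realize the spline space inside the abstract framework of Section~\ref{sec:gen} and then to bound the resulting constant $\Cgen_{\prec,r}$ explicitly. Write $m:=p-k\geq 1$ for the defect. Since applying $K$ raises both the polynomial degree and the smoothness by one while preserving the defect, the tower generated from the base space $\mathcal{Z}_0:=\mathcal{S}^{-1}_{m-1,\knots}$ of (discontinuous) piecewise polynomials of degree $m-1$ satisfies $\mathcal{Z}_j=\mathcal{S}^{j-1}_{m-1+j,\knots}$; in particular $\mathcal{Z}_{k+1}=\mathcal{S}^k_{p,\knots}$. First I would check that the structural hypothesis \eqref{eq:Xsimpl}, namely $\mathcal{P}_0+K(\mathcal{Z}_{j-1})=\mathcal{P}_0+K^\ast(\mathcal{Z}_{j-1})$, holds at every level; for spline spaces this follows from their invariance under reflection of $[a,b]$, which interchanges $K$ and $K^\ast$. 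Since $\mathcal{P}_p\subseteq\mathcal{S}^k_{p,\knots}$, the inclusion $\mathcal{P}_{r-1}\subseteq\mathcal{Z}_{k+1}$ holds exactly when $p\geq r-1$, so Lemma~\ref{lem:L2} applies and reduces the theorem to proving $\Cgen_{k+1,r}=\|(I-S_p^k)K^r\|\leq c_{p,k,r}h^r$.

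Next I would reduce this $r$-fold constant to single-step constants. Because $K\mathcal{Z}_{j-1}\subseteq\mathcal{Z}_j$, the orthogonal projection annihilates that part and one obtains the telescoping identity $(I-Z_j)K=(I-Z_j)K(I-Z_{j-1})$ for $j\geq 1$. Iterating down to the base level gives
\begin{equation*}
(I-Z_{k+1})K^r=(I-Z_{k+1})K(I-Z_k)K\cdots(I-Z_1)K\,(I-Z_0)K^{\,r-k-1},
\end{equation*}
and hence, by submultiplicativity of the operator norm,
\begin{equation*}
\Cgen_{k+1,r}\leq\Big(\prod_{s=1}^{k+1}\Cgen_{s,1}\Big)\,\Cgen_{0,\,r-k-1}.
\end{equation*}
When $r\leq k+1$ one stops the peeling earlier and is left with exactly $r$ single-step factors $\Cgen_{s,1}$ of positive index. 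Everything is thereby reduced to a sharp bound for the single-step constant $\Cgen_{s,1}=\|(I-Z_s)K\|$ and, in the low-smoothness regime, for the base-space constant $\Cgen_{0,r-k-1}$.

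The heart of the argument, and the main obstacle, is the sharp single-step bound. Using \eqref{eq:Xsimpl} one has $\|(I-Z_s)K\|=\|(I-Z_s)K^\ast\|$, and squaring gives $\Cgen_{s,1}^2=\|(I-Z_s)KK^\ast(I-Z_s)\|$, i.e.\ the largest eigenvalue of this self-adjoint operator, equivalently the maximum of $\|(I-Z_s)Kf\|^2/\|f\|^2$; here $KK^\ast$ is the Green operator of $-\partial^2$ with suitable boundary conditions. Removing the captured low modes by $(I-Z_s)$ and comparing against a trial approximant yields the dimensionless constants. For maximal smoothness ($m=1$) an eigenfunction (trigonometric) comparison gives the sharp value $\Cgen_{s,1}\leq h/\pi$, whereas for $m\geq 2$ a per-element comparison gives $\Cgen_{s,1}\leq \tfrac{h}{2\sqrt{m(m+1)}}$. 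These are precisely the factors in $c_{p,k,r}$; note that at $m=1$ the trigonometric bound is strictly smaller than $\tfrac{h}{2\sqrt{m(m+1)}}$, which is why maximal smoothness must be singled out.

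Finally I would assemble the estimate according to the two regimes. If $k\geq r-2$ the peeling produces exactly $r$ single-step factors (the base term being absent, or itself a single step), so $\Cgen_{k+1,r}\leq\big(\tfrac{h}{2\sqrt{m(m+1)}}\big)^r$, which is $c_{p,k,r}h^r$; for $m=1$ one uses $h/\pi$ throughout and recovers $(1/\pi)^r$ (this regime always occurs when $k=p-1$ and $p\geq r-1$). If $k<r-2$ there remain $r-k-1\geq 2$ integrations acting on $\mathcal{S}^{-1}_{m-1,\knots}$; since $Z_0$ acts elementwise, $\Cgen_{0,r-k-1}$ splits over the intervals $I_j$ and a single-element $L^2$-estimate for approximating $K^{\,r-k-1}$ by polynomials of degree $m-1$ yields $\Cgen_{0,r-k-1}\leq(\tfrac{h}{2})^{\,r-k-1}\sqrt{(p+1-r)!/(p-1+r-2k)!}$, valid since $p\geq r-1$. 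Combining the $k+1$ single-step factors with this base bound reproduces $c_{p,k,r}h^r$ in the case $k<r-2$. Besides the sharp eigenvalue estimate, the delicate point is exactly this per-element factorial bound, where the order of integration and the available polynomial degree must be matched precisely to produce the stated factorials.
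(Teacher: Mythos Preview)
The paper does not prove this theorem at all; it is simply quoted from \cite[Theorem~3]{Sande:2020} with no argument beyond the citation. Your proposal is therefore not competing with anything in the present paper --- it is a sketch of how the proof in the cited reference proceeds.

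As such a sketch, your outline is faithful to \cite{Sande:2020}: the tower $\mathcal{Z}_j=\mathcal{S}^{\,j-1}_{m-1+j,\Xi}$ built from the discontinuous base space, the telescoping identity $(I-Z_j)K=(I-Z_j)K(I-Z_{j-1})$, the submultiplicative bound $\Cgen_{k+1,r}\le\big(\prod_s\Cgen_{s,1}\big)\,\Cgen_{0,r-k-1}$, and the case split according to whether $r\le k+2$ are exactly the architecture of that proof. The one point where you understate the difficulty is the single-step bound $\Cgen_{s,1}\le h/\pi$ in the maximal-smoothness case $m=1$: this is not an elementary ``eigenfunction comparison'' but the central result of \cite{Floater:2018,Sande:2019}, obtained via Kolmogorov $n$-width arguments and a nontrivial analysis of how the extremal subspaces for $(I-Z_s)K$ embed into the spline tower. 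By contrast, the per-element bounds for $m\ge2$ and the base-level factorial estimate are indeed comparatively routine, as you say.
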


\begin{remark}\label{rmk:c-bound}
In \cite[Remark~3]{Sande:2020} it was observed that for $k\leq p-2$ the constant $c_{p,k,r}$ can be bounded by a simpler expression using the Stirling formula. For $k\geq r-2$, we have
\begin{equation*}
c_{p,k,r}\leq \left(\frac{1}{2(p-k)}\right)^r,
\end{equation*}
while for $k<r-2$ we get
\begin{equation*}
c_{p,k,r}\leq \left(\frac{e}{4(p-k)}\right)^r.
\end{equation*}
\end{remark}

In the following subsections we detail the construction and properties of the projection in \eqref{eq:Qproj} when the approximation space is the spline space $\mathcal{S}^k_{p,\knots}$.

\subsection{The Q projector for spline spaces}\label{sec:spline-Q}

If $\mathcal{Z}_0=\mathcal{S}^{-1}_{p-k-1,\knots}$ then we have
$\mathcal{Z}_{k+1}=\mathcal{S}^k_{p,\knots}$
for the sequence of spaces in \eqref{eq:Xsimpl}.
Specifically,
\begin{equation*}
\mathcal{S}^k_{p,\knots} = \mathcal{P}_0+K(\mathcal{S}^{k-1}_{p-1,\knots}) = \mathcal{P}_0+K^*(\mathcal{S}^{k-1}_{p-1,\knots}), \quad k\geq0.
\end{equation*}
As a special case of the projection in \eqref{eq:Qproj} we define the sequence of projection operators $Q_p^{q,k}:H^q(a,b)\to \mathcal{S}^k_{p,\knots}$, for $q=0,\ldots,k+1$, by $Q_{p}^{0,k}:=S^k_p$ and
\begin{equation}\label{eq:Qspline}
Q_p^{q,k}u := u(a) + KQ_{p-1}^{q-1,k-1}\partial u.
\end{equation}
By combining Theorem~\ref{thm:low-order} with Theorem~\ref{thm:Q} we obtain the following error estimate for $Q_p^{q,k}$.

\begin{theorem}\label{thm:Qspline}
Let $u\in H^r(a,b)$ be given.
For any degree $p$, knot sequence $\knots$ and smoothness $-1\leq k\leq p-1$, let $Q_p^{q,k}$ be the projector onto $\mathcal{S}^{k}_{p,\knots}$ defined in \eqref{eq:Qspline} for $q=0,\ldots,\min\{k+1,r\}$. Then, for any $\ell=0,\ldots,q$, we have
\begin{equation*}
\|\partial^{\ell}(u-Q^{q,k}_pu)\| \leq c_{p-q,k-q,q-\ell} c_{p-q,k-q,r-q} h^{r-\ell}\|\partial^ru\|,
\end{equation*}
for all $p\geq \max\{r-1,2q-\ell-1\}$. 
\end{theorem}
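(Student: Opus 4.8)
The plan is to derive Theorem~\ref{thm:Qspline} by specializing the abstract estimate of Theorem~\ref{thm:Q} to the spline setting and then inserting the explicit spline constants from Theorem~\ref{thm:low-order}. First I would identify the correct instance of the abstract framework: as noted just before Section~\ref{sec:spline-Q}, taking $\mathcal{Z}_0=\mathcal{S}^{-1}_{p-k-1,\knots}$ yields $\mathcal{Z}_\prec=\mathcal{S}^{k}_{p,\knots}$ with $\prec=k+1$, and crucially the shifted spaces behave as $\mathcal{Z}_{\prec-q}=\mathcal{S}^{k-q}_{p-q,\knots}$. With this dictionary, the projector $Q_p^{q,k}$ in \eqref{eq:Qspline} is exactly the abstract $Q^q_\prec$ of \eqref{eq:Qproj} for this choice of spaces, and the $L^2$-projector $Z_{\prec-q}$ is $S^{k-q}_{p-q}$.

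Next I would invoke Theorem~\ref{thm:Q} directly, which gives
\begin{equation*}
\|\partial^{\ell}(u-Q^{q,k}_pu)\|\leq \Cgen_{\prec-q,q-\ell}\,\Cgen_{\prec-q,r-q}\,\|\partial^ru\|,
\end{equation*}
provided the polynomial inclusions $\mathcal{P}_{r-q-1}\subseteq\mathcal{Z}_{\prec-q}$ and $\mathcal{P}_{q-\ell-1}\subseteq\mathcal{Z}_{\prec-q}$ hold. The remaining task is twofold: to translate the two abstract constants $\Cgen_{\prec-q,r-q}$ and $\Cgen_{\prec-q,q-\ell}$ into the explicit spline constants, and to verify that the stated degree hypothesis $p\geq\max\{r-1,2q-\ell-1\}$ guarantees both polynomial inclusions. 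For the constants, I would recall that $\Cgen_{\prec-q,m}=\|(I-Z_{\prec-q})K^m\|$ is by definition (via Lemma~\ref{lem:L2}) the sharp $L^2$-error constant for the projector $S^{k-q}_{p-q}$ acting on $H^m$; hence Theorem~\ref{thm:low-order} applied to the spline space $\mathcal{S}^{k-q}_{p-q,\knots}$ yields $\Cgen_{\prec-q,m}\leq c_{p-q,k-q,m}\,h^{m}$. Setting $m=r-q$ and $m=q-\ell$ and multiplying produces the factor $c_{p-q,k-q,q-\ell}\,c_{p-q,k-q,r-q}\,h^{(r-q)+(q-\ell)}=c_{p-q,k-q,q-\ell}\,c_{p-q,k-q,r-q}\,h^{r-\ell}$, which is exactly the claimed bound.

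Finally, I would check the hypotheses carefully, since this is where the degree condition $p\geq\max\{r-1,2q-\ell-1\}$ must come from. The inclusion $\mathcal{P}_{r-q-1}\subseteq\mathcal{S}^{k-q}_{p-q,\knots}$ requires $p-q\geq r-q-1$, i.e.\ $p\geq r-1$; the inclusion $\mathcal{P}_{q-\ell-1}\subseteq\mathcal{S}^{k-q}_{p-q,\knots}$ requires $p-q\geq q-\ell-1$, i.e.\ $p\geq 2q-\ell-1$; together these give the stated maximum. I would also confirm that Theorem~\ref{thm:low-order} is applicable with the shifted parameters, namely that $p-q\geq m-1$ holds for $m=r-q$ (which is again $p\geq r-1$) and for $m=q-\ell$ (again $p\geq 2q-\ell-1$), so no additional constraint is needed. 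The main (and essentially the only) obstacle is the bookkeeping of indices: one must be scrupulous that the polynomial degrees, smoothness indices, and the exponents of $h$ all shift consistently by $q$ when passing from $\mathcal{S}^{k}_{p,\knots}$ to $\mathcal{S}^{k-q}_{p-q,\knots}$, and that the two separate applications of Theorem~\ref{thm:low-order}—one at differentiation order $r-q$ and one at order $q-\ell$—each satisfy their own degree requirement. Once the index translation is pinned down, the result is an immediate composition of the two cited theorems.
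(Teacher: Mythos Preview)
Your proposal is correct and follows exactly the approach the paper indicates: the paper's proof of Theorem~\ref{thm:Qspline} is the single sentence ``By combining Theorem~\ref{thm:low-order} with Theorem~\ref{thm:Q} we obtain the following error estimate for $Q_p^{q,k}$,'' and your write-up carries out precisely this combination, including the correct index dictionary $\mathcal{Z}_{\prec-q}=\mathcal{S}^{k-q}_{p-q,\knots}$ and the verification that the degree condition $p\geq\max\{r-1,2q-\ell-1\}$ simultaneously ensures the polynomial inclusions in Theorem~\ref{thm:Q} and the applicability of Theorem~\ref{thm:low-order} at orders $r-q$ and $q-\ell$.
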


\begin{remark}\label{rmk:estimate-simple}
In the case of maximal smoothness, $k=p-1$, the estimate in the above theorem becomes
\begin{equation}\label{eq:spline-max-smooth}
\|\partial^{\ell}(u-Q^{q,k}_pu)\| \leq \left(\frac{h}{\pi}\right)^{r-\ell}\|\partial^ru\|.
\end{equation}
If, on the other hand, $k\leq p-2$, we can use the simplified estimates in Remark~\ref{rmk:c-bound} to obtain
\begin{equation*}
\|\partial^{\ell}(u-Q^{q,k}_pu)\| \leq \left(\frac{e\,h}{4(p-k)}\right)^{r-\ell}\|\partial^ru\|.
\end{equation*}
\end{remark}

\begin{remark}\label{rmk:estimate-abstract}
The error bound in Theorem~\ref{thm:Qspline} relies on the constant $c_{p,k,r}$ used in Theorem~\ref{thm:low-order} for the $L^2$-projector. It is clear from the general result in Theorem~\ref{thm:Q} and \cite[Remark~1]{Sande:2020} that the same procedure can be followed to convert any constant derived for the $L^2$-projector into a constant for the projector $Q^{q,k}_p$. For example, the constant $C_{h,p,k,r}$ for the $L^2$-projector in \cite[Corollary~1]{Sande:2020} immediately leads to an alternative error bound for $Q^{q,k}_p$: under the assumptions of Theorem~\ref{thm:Qspline}, we have
\begin{equation*}
\|\partial^{\ell}(u-Q^{q,k}_pu)\| \leq C_{h,p-q,k-q,q-\ell} C_{h,p-q,k-q,r-q}\|\partial^ru\|.
\end{equation*}
\end{remark}

\begin{example}\label{ex:error}
Let $q=2$ and $[a,b]=[0,1]$. We choose $u(x)=\sin(4x)$.
Figure~\ref{fig:error} shows the convergence behavior of $\|\partial^{\ell}(u-Q^{2,p-1}_pu)\|$ for $p=2,3,4$ and $\ell=0,1$. We observe the optimal convergence order in $h$ (namely $p+1-\ell$) when $p\geq3-\ell$, which agrees with the theoretical prediction in Theorem~\ref{thm:Qspline} ($r=p+1$). Contrarily, the case $p=2$ and $\ell=0$ presents a suboptimal convergence order in $h$ of $2$ (instead of $3$), but this case is not covered by Theorem~\ref{thm:Qspline}.
\end{example}
\begin{figure}[t!]
\centering
\includegraphics[scale=0.7]{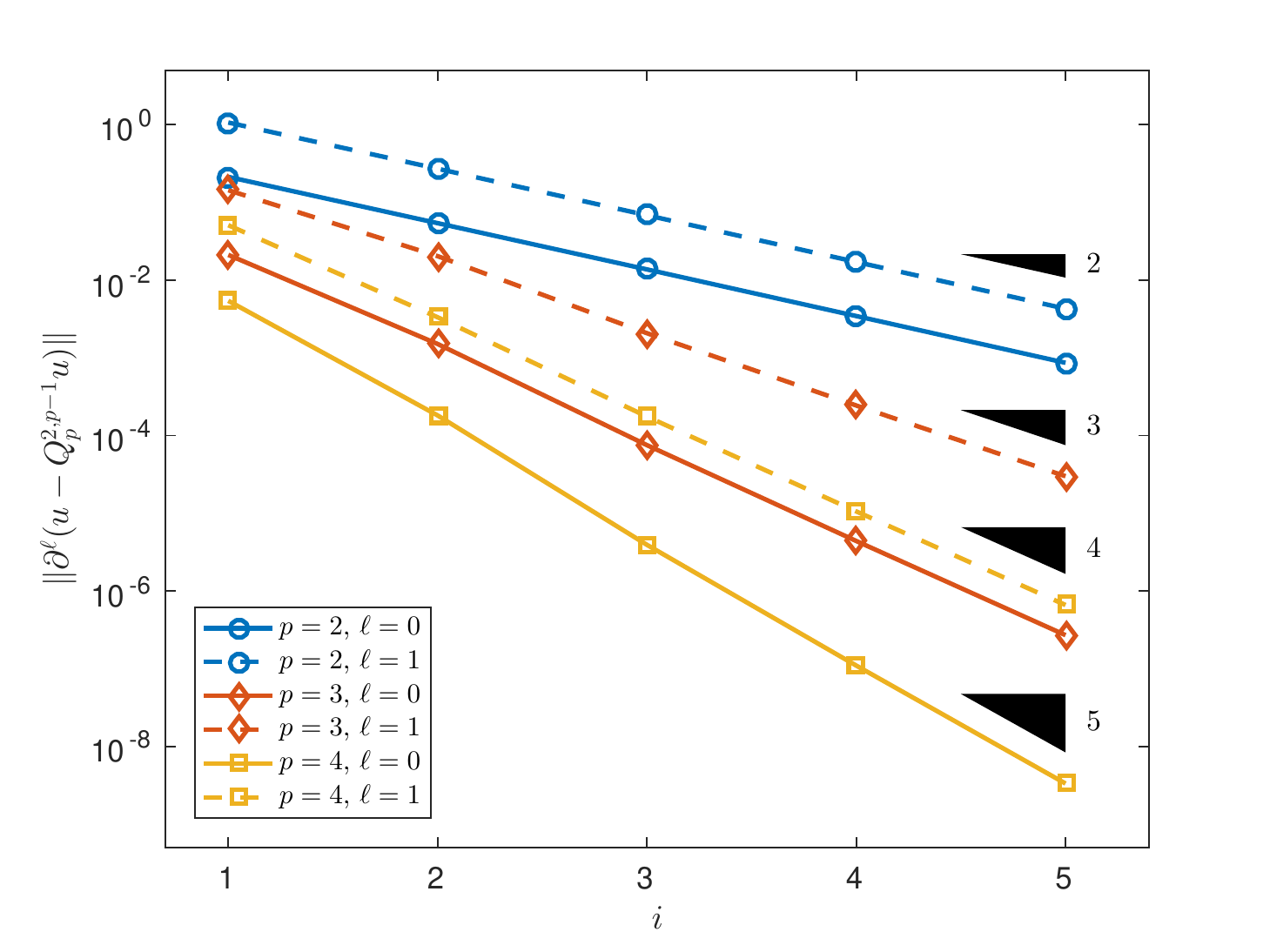}
\caption{The convergence of the error $\|\partial^{\ell}(u-Q^{2,p-1}_pu)\|$ for the problem specified in Example~\ref{ex:error}, taking a uniform knot sequence with $h=2^{-i}$, $i=1,\ldots,5$, and the different choices $p=2,3,4$ and $\ell=0,1$. The reference convergence order in $h$ is indicated by black triangles.}\label{fig:error}
\end{figure}

\begin{example}\label{ex:buffa}
If we choose $q=k+1$ in Theorem~\ref{thm:Qspline}, and consider all $\ell=0,\ldots,k+1$, then the condition on the degree becomes $p\geq \max\{r-1,2k+1\}$.
This is the same condition on the degree as in \cite[Theorem~2]{Buffa:11} and so their theorem is very similar to the special case $q=k+1$ of Theorem~\ref{thm:Qspline}. This is not surprising since the projector considered in \cite{Buffa:11} is defined by using the polynomial projector $Q^{k+1}_p:H^{k+1}(I_j)\to\mathcal{P}_p$ on each knot interval $I_j$, $j=0,\ldots,\nknots-1$, and then gluing them together with smoothness $k$ by means of the boundary interpolation of $Q^{k+1}_p$.
\end{example}

\begin{remark}\label{rem:outliers}
Similar to what was done in \cite[Section~3]{ManniSS:2022} for the Laplacian, the error estimate in \eqref{eq:spline-max-smooth} can be used to predict the number of outlier modes \cite{Cottrell:2006,Hiemstra:2021} in the numerical approximation of eigenvalues of higher order Laplacians with (full) Dirichlet boundary conditions, when using smooth spline spaces. As an example, consider the following eigenvalue problem: find $u_i\in H^2(0,1)$ and $\lambda_i\in\RR$, for $i=1,2,\ldots$, such that
\begin{align*}
u_i^{(4)}(x) &= \lambda_i u_i(x), \\
u_i(0)&=u_i(1)=u_i'(0)=u_i'(1)=0.
\end{align*} 
If we define the spline space $\mathcal{S}_{p,\knots,00}:=\{s\in\mathcal{S}^{p-1}_{p,\knots}:  s(0)=s(1)=s'(0)=s'(1)=0\}$ and let $n:=\dim \mathcal{S}_{p,\knots,00}$, then the above eigenvalue problem can be numerically approximated with the discrete weak formulation: find $u_{h,i}\in \mathcal{S}_{p,\knots,00}$ and $\lambda_{h,i}\in\RR$, for $i=1,\ldots,n$, such that
\begin{align}\label{eq:weak-eig}
(\partial^2 u_{h,i},\partial^2 v_h) = \lambda_{h,i} (u_{h,i},v_h), \quad \forall v_h\in \mathcal{S}_{p,\knots,00}.
\end{align}
The explicit error estimate in \eqref{eq:spline-max-smooth} can now be used to estimate the number of outlier modes, i.e., the number of eigenvalues which are badly approximated by \eqref{eq:weak-eig}. Using the same strategy as in \cite[Section~3]{ManniSS:2022}, we find that for fixed $h$, the discrete eigenvalue $\lambda_{h,i}$ is not an outlier if $h\lambda_i^{1/4}<\pi$. The main difference compared to the simpler case of the Laplacian in \cite[Section~3]{ManniSS:2022} is that $\lambda_i$ must now also be estimated; however, this can be done either analytically or numerically. Analytically, it is known that $\lambda_i$ is very close to $((2i+1)\pi/2)^4$ for $i=1,2,\ldots$; see, e.g., \cite[Section~5.5]{Davies:1995}.
Finally, we note that, using the techniques in \cite{ManniSS:2022}, this outlier discussion can also be extended to the higher dimensional tensor-product case.
\end{remark}

\subsection{Estimates for higher derivatives}

The $\ell$ in Theorem~\ref{thm:Qspline} only goes up to $q$, while the function $u$ is assumed to be in $H^r(a,b)$ for $r\geq q$. If $r>q$ then we can use an argument from \cite[Section~4]{Schultz:70} to obtain error estimates in the cases $q<\ell\leq r$. This argument provides an optimal rate of convergence in the maximum knot distance $h$, whenever the knot sequence is quasi-uniform, but convergence in the degree $p$ appears less than optimal. 
For any knot sequence $\knots$, let $\hmin$ denote its minimum knot distance. Since $\partial^{\ell}Q^{q,k}_pu$ is not in $L^2(a,b)$ for $\ell>k+1$ we define the broken norm $\|\cdot\|_{\knots}$ by
\begin{equation*}
\|\cdot\|_{\knots}^2 := \sum_{j=0}^{\nknots}\|\cdot\|_{L^2(I_j)}^2.
\end{equation*}

\begin{proposition}\label{pro:schultz}
Let $u\in H^r(a,b)$ be given.
For any degree $p$, knot sequence $\knots$ and smoothness $-1\leq k\leq p-1$, let $Q_p^{q,k}$ be the projector onto $\mathcal{S}^{k}_{p,\knots}$ defined in \eqref{eq:Qspline} for $q=0,\ldots,\min\{k+1,r-1\}$. Moreover, we set $m:=\max\{2r-q-1,p\}$. Then, for any $\ell=q+1,\ldots,r$, we have
\begin{align*}
&\|\partial^{\ell}(u-Q^{q,k}_pu)\|_{\knots} \\
&\quad \leq \left[c_{m-r,-1,r-\ell} + (c_{m-r,-1,r-q} + c_{p-q,k-q,r-q})\left(\frac{h}{h_{\min}}\right)^{\ell-q}\Biggl(\prod_{i=m-\ell+1}^{m-q} d_i\Biggr) \right]h^{r-\ell}\|\partial^ru\|,
\end{align*}
for all $p\geq r-1$.
\end{proposition}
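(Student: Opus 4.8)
The plan is to follow the Schultz-type strategy: insert an auxiliary piecewise polynomial $w$ of degree $m$, use the triangle inequality
\[
\|\partial^{\ell}(u-Q^{q,k}_pu)\|_{\knots}\le \|\partial^{\ell}(u-w)\|_{\knots}+\|\partial^{\ell}(w-Q^{q,k}_pu)\|_{\knots},
\]
bound the first summand by a local approximation estimate for the $\ell$-th derivative, and bound the second by an inverse inequality, exploiting that $w-Q^{q,k}_pu$ is piecewise polynomial. The whole point is to choose $w$ so that the first summand reproduces exactly the constant $c_{m-r,-1,r-\ell}$, and so that reducing the second summand from the $\ell$-th to the $q$-th derivative produces the factor $(\prod_{i=m-\ell+1}^{m-q}d_i)(h/h_{\min})^{\ell-q}$.

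For the auxiliary function I would take $w$ defined interval by interval as $w|_{I_j}:=Q^{r}_{m}(u|_{I_j})$, the order-$r$ derivative-commuting projector of Section~\ref{sec:interp-projector} onto $\mathcal{P}_m$ on $I_j$ (the abstract framework applies with $\mathcal{Z}_t=\mathcal{P}_t$; no global smoothness of $w$ is needed, since we work in the broken norm). Crucially, taking the interpolation order equal to $r$ makes Theorem~\ref{thm:Q} collapse: the factor $\Cgen_{\cdot,r-r}=\Cgen_{\cdot,0}=1$ drops out, and the remaining factor is the local $L^2$-projection constant onto $\mathcal{P}_{m-r}$ for $K^{r-\ell}$, which by Theorem~\ref{thm:low-order} (with $k=-1$ on a single interval) is at most $c_{m-r,-1,r-\ell}(\xi_{j+1}-\xi_j)^{r-\ell}$. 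Squaring, summing over $j$, and using $\xi_{j+1}-\xi_j\le h$ then yields $\|\partial^{\ell}(u-w)\|_{\knots}\le c_{m-r,-1,r-\ell}h^{r-\ell}\|\partial^ru\|$, which is the first term in the asserted bound, and the same argument with $\ell$ replaced by $q$ gives $\|\partial^{q}(u-w)\|_{\knots}\le c_{m-r,-1,r-q}h^{r-q}\|\partial^ru\|$. Here $w$ is a piecewise polynomial of degree $m$, and $m=\max\{2r-q-1,p\}$ is engineered precisely so that $m\ge r$ (since $q\le r-1$), making order $r$ admissible, so that the polynomial-inclusion hypotheses of Theorem~\ref{thm:Q} hold for every $\ell\in\{q+1,\ldots,r\}$, and so that $p\le m$.

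For the second summand, on each $I_j$ the difference $w-Q^{q,k}_pu$ is a polynomial of degree at most $m$; applying Lemma~\ref{lem:inv-ineq-poly} a total of $\ell-q$ times (with $b-a$ replaced by $\xi_{j+1}-\xi_j\ge h_{\min}$ and the successive degrees bounded by $m-\ell+1,\ldots,m-q$) reduces $\|\partial^{\ell}(w-Q^{q,k}_pu)\|_{L^2(I_j)}$ to $(h_{\min})^{-(\ell-q)}(\prod_{i=m-\ell+1}^{m-q}d_i)\,\|\partial^{q}(w-Q^{q,k}_pu)\|_{L^2(I_j)}$. Summing over $j$ and splitting $\|\partial^{q}(w-Q^{q,k}_pu)\|_{\knots}\le\|\partial^{q}(u-w)\|_{\knots}+\|\partial^{q}(u-Q^{q,k}_pu)\|$, I would bound the first part by $c_{m-r,-1,r-q}h^{r-q}\|\partial^ru\|$ (the $\ell=q$ estimate just established) and the second by $c_{p-q,k-q,r-q}h^{r-q}\|\partial^ru\|$ via Theorem~\ref{thm:Qspline} with $\ell=q$ (legitimate since $p\ge r-1$, and using $c_{p-q,k-q,0}=1$). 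Note that $\partial^{q}(u-Q^{q,k}_pu)\in L^2(a,b)$, so its broken norm is its ordinary norm. Finally, the identity $(h_{\min})^{-(\ell-q)}h^{r-q}=(h/h_{\min})^{\ell-q}h^{r-\ell}$ converts these into the second term of the bound, and adding the first term completes the proof. The main obstacle is the bookkeeping of constants: one must recognise that interpolation order exactly $r$ (rather than $\ell$ or $q$) is what matches the stated degree $m-r$ and collapses the spurious factor, and one must track the polynomial degree through the repeated inverse inequality so that the product $\prod_{i=m-\ell+1}^{m-q}d_i$ and the powers of $h_{\min}$ emerge with the correct indices; the constraints hidden in the definition of $m$ then have to be checked to guarantee that every invoked estimate is admissible.
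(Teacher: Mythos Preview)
Your proposal is correct and follows essentially the same strategy as the paper's proof. The only difference is cosmetic: the paper takes as auxiliary function the \emph{global} spline $w=Q^{r,r-1}_m u\in\mathcal{S}^{r-1}_{m,\knots}$ (so that Theorem~\ref{thm:Qspline} applies directly and $\partial^q w\in L^2(a,b)$), whereas you build $w$ interval by interval via the local polynomial projector $Q^r_m$; both choices yield the same constants $c_{m-r,-1,\cdot}$ and feed into the same inverse-inequality and triangle-inequality steps.
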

\begin{proof}
First, from Theorem~\ref{thm:Qspline} we deduce
\begin{align*}
\|\partial^{q}(Q^{r,r-1}_mu-Q^{q,k}_pu)\| &\leq \|\partial^{q}(Q^{r,r-1}_mu-u)\| + \|\partial^{q}(u-Q^{q,k}_pu)\|
\\
&\leq (c_{m-r,-1,r-q} + c_{p-q,k-q,r-q}) h^{r-q}\|\partial^ru\|,
\end{align*}
for $p\geq \max\{r-1,q-1\}=r-1$ and $m\geq\max\{r-1,2r-q-1\}=2r-q-1$.
Next, using the inverse inequality in Lemma~\ref{lem:inv-ineq-poly} on each knot interval, we obtain
\begin{equation*}
\|\partial^{\ell}(Q^{r,r-1}_mu-Q^{q,k}_pu)\|_{\knots} \leq \left(\frac{1}{h_{\min}}\right)^{\ell-q}\left(\prod_{i=m-\ell+1}^{m-q} d_i\right) \|\partial^{q}(Q^{r,r-1}_mu-Q^{q,k}_pu)\|.
\end{equation*}
The result now follows from
\begin{equation*}
\|\partial^{\ell}(u-Q^{q,k}_pu)\|_{\knots} \leq \|\partial^{\ell}(u-Q^{r,r-1}_mu)\| + \|\partial^{\ell}(Q^{r,r-1}_mu-Q^{q,k}_pu)\|_{\knots},
\end{equation*}
together with another application of Theorem~\ref{thm:Qspline}.
\end{proof}
We remark that the case $q=0$ and $\ell=1$ of the above proposition is a standard argument for showing error estimates (and stability) of the $L^2$-projection in the $H^1$-(semi)norm for quasi-uniform grids; see, e.g., the proof of \cite[Lemma~1]{Bank:1981} or \cite[Corollary~7.8]{Braess:2007}.

\subsection{Comparison with other projectors}\label{sec:Schultz}

Explicit error estimates for certain spline projectors with boundary interpolation have also been considered in \cite{Schultz:70}. We denote the sequence of projection operators onto $\mathcal{S}^{k}_{p,\knots}$ studied in \cite[Theorem~3.5]{Schultz:70} by $I_p^{q,k}$ for $q=1,\ldots,k+1$ and $p=2q-1$. Then, for any $\ell=0,\ldots,q$ and $r=2q$, it has been proved that
\begin{equation*}
\|\partial^{\ell}(u-I^{q,k}_pu)\| \leq K_{q,q,k,\ell} K_{q,q,k,0} h^{r-\ell}\|\partial^ru\|,
\end{equation*}
where
\begin{equation*}
  K_{q,q,k,\ell} := \begin{cases}
  1, &\ell=q,\\[0.25cm]
  \dfrac{(k+2-q)!}{(k+\ell+2-2q)!\,\pi^{q-\ell}}, & 2q-k-2 < \ell < q,\\[0.5cm]
  \dfrac{(k+2-q)!}{\pi^{q-\ell}}, &\ell\leq 2q-k-2.
\end{cases}
\end{equation*}
Numerical experiments reveal that the constants in Theorem~\ref{thm:Qspline} are never larger (and often much smaller) for the same values of $q,p,k,\ell,r$; see Example~\ref{ex:schultz} for a visual illustration.
Better constants for the projector $I_p^{q,k}$ can be found in \cite{Agarwal:94}, but they are not explicit in most cases. We note that the latter constants are explicit for maximal smoothness $k=p-1$ but these are still worse than $\left(\frac{1}{\pi}\right)^{r-\ell}$, the corresponding values attained in Theorem~\ref{thm:Qspline}. 
\begin{example}\label{ex:schultz}
In the case $\ell=0$, the constant in the error estimate of \cite{Schultz:70} is $(K_{q,q,k,0})^2$, while the one of Theorem~\ref{thm:Qspline} is $(c_{q-1,k-q,q})^2$ taking into account $r=2q$ and $p=2q-1$. To understand how these two expressions relate, we look at the quantity
\begin{equation}\label{ex:schultz-diff}
\log(K_{q,q,k,0})-\log(c_{q-1,k-q,q}).
\end{equation}
Several values are depicted in Figure~\ref{fig:schultz}. They are clearly nonnegative, implying that $K_{q,q,k,0} \geq c_{q-1,k-q,q}$.
\end{example}
\begin{figure}[t!]
\centering
\includegraphics[scale=0.7]{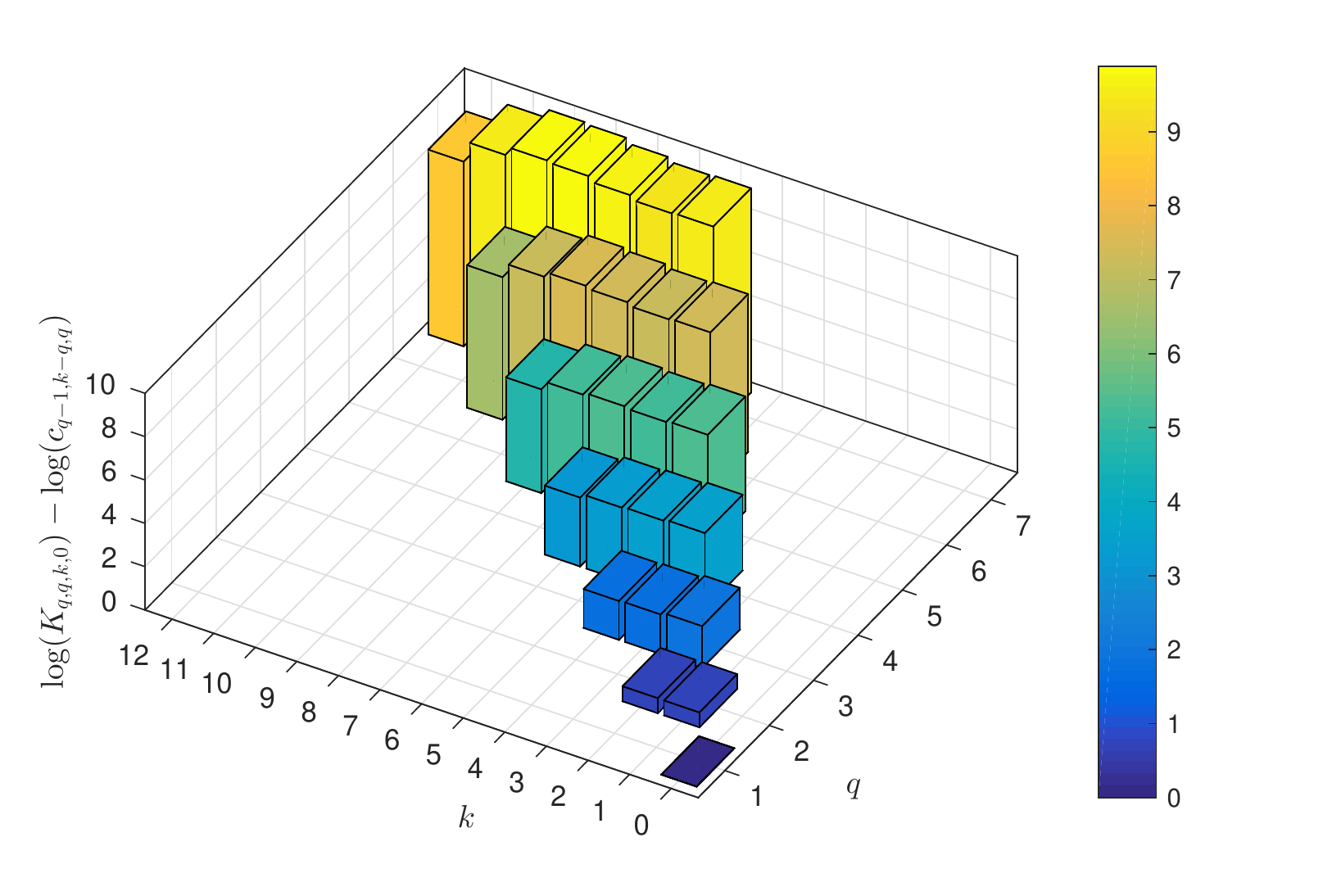}
\caption{The quantity in \eqref{ex:schultz-diff} for different choices of $q\geq1$ and $q-1\leq k\leq 2q-2$.}\label{fig:schultz}
\end{figure}

Let us now define the Ritz projector $R_p^{q,k}: H^q(a,b)\to \mathcal{S}^k_{p,\knots}$, for any $q=0,\ldots,k+1$, by
\begin{equation}\label{eq:Ritz-lower}
\begin{aligned}
(\partial^{q}R_p^{q,k} u,\partial^qv) &= (\partial^q u, \partial^q v), \quad &&\forall v\in \mathcal{S}^k_{p,\knots},
\\
(R_p^{q,k}u,g)&=(u,g), &&\forall g\in \mathcal{P}_{q-1}.
\end{aligned}
\end{equation}
This projector is a special case of \eqref{eq:Rproj}.
From Proposition \ref{pro:R-Q} we know that $R_p^{q,k}=Q_p^{q,k}$ whenever $p\geq 3q-1$. For degrees $2q-1\leq p < 3q-1$ the projectors $R_p^{q,k}$ and $Q_p^{q,k}$ are in general different, however, we can use Proposition~\ref{pro:R-Q-bound} (see \eqref{eq:R-error} and \eqref{eq:R-error-l}) to achieve an estimate for derivatives of the difference between them.

\begin{proposition}\label{pro:spline-supconv}
Let $u\in H^r(a,b)$ be given.
For any degree $p$, knot sequence $\knots$ and smoothness $-1\leq k\leq p-1$, let $Q_p^{q,k}$ and $R_p^{q,k}$ be the projectors onto $\mathcal{S}^{k}_{p,\knots}$ defined, respectively, in \eqref{eq:Qspline} and \eqref{eq:Ritz-lower} for $q=0,\ldots,\min\{k+1,r\}$. Then, we have 
\begin{equation*}
  \|R_p^{q,k}u-Q_p^{q,k}u\| \leq c_{p-q,k-q,q}c_{p-q,k-q,r-q}h^r \|\partial^ru\|,
\end{equation*}
for all $p\geq \max\{r-1,2q-1\}$.
Furthermore, for $\ell=1,\ldots,q-1$, we have
\begin{equation}\label{eq:spline-supconv}
\|\partial^\ell(R_p^{q,k}u-Q_p^{q,k}u)\|
\leq c_{p-q,k-q,q}c_{p-q,k-q,r-q}h^r \left(\frac{1}{b-a}\right)^{\ell}\left(\prod_{i=q-\ell}^{q-1} d_i\right) \|\partial^ru\|,
\end{equation}
for all $p\geq \max\{r-1,2q-1\}$. 
Finally, for $\ell\geq q$, we have
\begin{equation*}
\|\partial^\ell(R_p^{q,k}u-Q_p^{q,k}u)\|=0.
\end{equation*}
\end{proposition}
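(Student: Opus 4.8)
The plan is to read off all three inequalities from the abstract estimates of Section~\ref{sec:Ritz}, after identifying the spline space as one of the spaces $\mathcal{Z}_\prec$ and replacing the abstract operator-norm constants $\Cgen_{\cdot,\cdot}$ by the explicit spline constants $c_{\cdot,\cdot,\cdot}$ supplied by Theorem~\ref{thm:low-order}.

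First I would set $\mathcal{Z}_0=\mathcal{S}^{-1}_{p-k-1,\knots}$ and $\prec=k+1$, so that $\mathcal{Z}_\prec=\mathcal{S}^{k}_{p,\knots}$ and consequently $Q^q_\prec=Q^{q,k}_p$ and $R^q_\prec=R^{q,k}_p$. Under this identification one has $\mathcal{Z}_{\prec-q}=\mathcal{S}^{k-q}_{p-q,\knots}$, and the combined abstract bounds \eqref{eq:R-error} and \eqref{eq:R-error-l} apply verbatim, provided the two polynomial inclusions $\mathcal{P}_{r-q-1}\subseteq\mathcal{Z}_{\prec-q}$ and $\mathcal{P}_{q-1}\subseteq\mathcal{Z}_{\prec-q}$ hold. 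Since $\mathcal{S}^{k-q}_{p-q,\knots}\supseteq\mathcal{P}_{p-q}$, these inclusions reduce to $p\geq r-1$ and $p\geq 2q-1$, i.e.\ to the stated hypothesis $p\geq\max\{r-1,2q-1\}$.

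Next I would convert the constants. By Lemma~\ref{lem:L2}, $\Cgen_{\prec-q,s}$ is the smallest constant for which $\|u-S^{k-q}_{p-q}u\|\leq\Cgen_{\prec-q,s}\|\partial^su\|$ holds, and Theorem~\ref{thm:low-order}, applied to the space $\mathcal{S}^{k-q}_{p-q,\knots}$, certifies $c_{p-q,k-q,s}h^s$ as an admissible such constant. Hence $\Cgen_{\prec-q,q}\leq c_{p-q,k-q,q}h^q$ and $\Cgen_{\prec-q,r-q}\leq c_{p-q,k-q,r-q}h^{r-q}$, and their product supplies the factor $c_{p-q,k-q,q}c_{p-q,k-q,r-q}h^r$. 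Substituting into \eqref{eq:R-error} yields the first inequality, and substituting into \eqref{eq:R-error-l} yields \eqref{eq:spline-supconv} for $1\leq\ell<q$, the inverse-inequality factor $(b-a)^{-\ell}\prod_{i=q-\ell}^{q-1}d_i$ passing through unchanged. The final claim, vanishing for $\ell\geq q$, is immediate from Proposition~\ref{pro:E}: the difference $R^{q,k}_pu-Q^{q,k}_pu=E_{q-1}u$ lies in $\mathcal{P}_{q-1}$, so every derivative of order $\geq q$ is zero.

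The only delicate point, which is bookkeeping rather than a genuine obstacle, is tracking the degree and smoothness shifts: one must verify that removing $q$ derivatives replaces the relevant $L^2$-projector by $S^{k-q}_{p-q}$, hence the abstract constant by $c_{p-q,k-q,\cdot}$, and that the two powers $h^q$ and $h^{r-q}$ multiply to exactly $h^r$. Once this is in place, the proposition follows directly from the material of Section~\ref{sec:Ritz} together with Theorem~\ref{thm:low-order}.
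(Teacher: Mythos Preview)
Your proposal is correct and matches the paper's own argument: the paper states this proposition without a formal proof, merely pointing to Proposition~\ref{pro:R-Q-bound} and the bounds \eqref{eq:R-error}--\eqref{eq:R-error-l}, which you then specialize to the spline setting exactly as in Theorem~\ref{thm:Qspline} by replacing $\Cgen_{\prec-q,\cdot}$ with $c_{p-q,k-q,\cdot}h^{\cdot}$ via Theorem~\ref{thm:low-order}. The polynomial-inclusion bookkeeping you describe is the same as in the paper, and the vanishing for $\ell\geq q$ is indeed just Proposition~\ref{pro:E}.
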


We see that the estimate in \eqref{eq:spline-supconv} converges in $h$ with order $r$, independently of $\ell$. This is more than the general convergence order $r-\ell$ of the spline space for $\ell>0$; see Theorem~\ref{thm:Qspline}. Actually, Example~\ref{ex:error-diff} indicates that there is even a higher order of convergence for high $p$ than predicted by the proposition.

\begin{figure}[t!]
\centering
\includegraphics[scale=0.7]{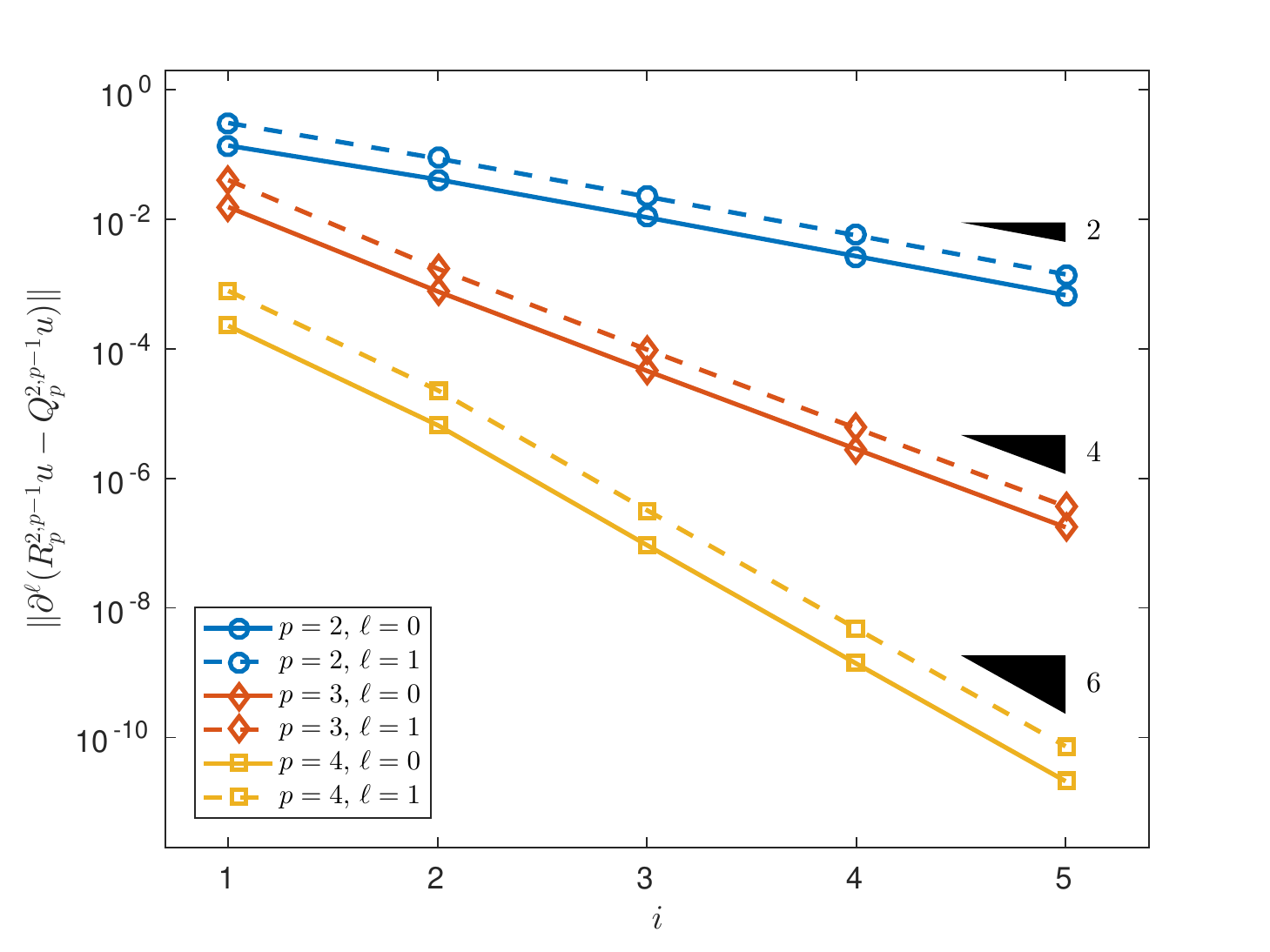}
\caption{The convergence of the error $\|\partial^{\ell}(R^{2,p-1}_pu-Q^{2,p-1}_pu)\|$ for the problem specified in Example~\ref{ex:error-diff}, taking a uniform knot sequence with $h=2^{-i}$, $i=1,\ldots,5$, and the different choices $p=2,3,4$ and $\ell=0,1$. The reference convergence order in $h$ is indicated by black triangles.}\label{fig:error-diff}
\end{figure}
\begin{example}\label{ex:error-diff}
Let $q=2$ and $[a,b]=[0,1]$. We choose again $u(x)=\sin(4x)$ as a continuation of Example~\ref{ex:error}.
Figure~\ref{fig:error-diff} shows the convergence behavior of $\|\partial^{\ell}(R^{2,p-1}_pu-Q^{2,p-1}_pu)\|$ for $p=2,3,4$ and $\ell=0,1$. As expected from Proposition~\ref{pro:spline-supconv}, we notice that the convergence order in $h$ does not depend on $\ell$ for a given $p$. However, the order seems to be equal to $2(p-q+1)$, which is better for $p>2q-1$ than the predicted order $r=p+1$.
\end{example}

\section{Conclusions}\label{sec:conclusion}
In an abstract framework, Ritz-type projectors with boundary interpolation properties have been presented and equipped with a priori error estimates. Their relation with the classical Ritz projectors has also been investigated.  

In the important case of projectors onto spline spaces, the provided error bounds are fully explicit in all the parameters defining the approximation space --- degree, smoothness, knot spacing --- and in the Sobolev regularity of the approximated function. They agree with those derived in \cite{Sande:2020} for the classical Ritz projectors and improve upon the error estimates known for typical spline projectors with Hermite interpolation properties at the boundary \cite{Schultz:70,Agarwal:94}.
In this perspective, the presented results enhance and complement those recently obtained for the classical Ritz projectors in \cite{Sande:2020} by enriching explicit spline error estimates with matching of the boundary data.

For the sake of brevity, the presentation has been confined to the onedimensional case. The multivariate extension of the proposed projectors (and of the corresponding error estimates) towards tensor-product structures is straightforward by following the same line of arguments already detailed in \cite{Sande:2020}.

The Hermite interpolation properties at the boundary make the presented projectors of interest in several applications as they directly allow for a local construction of globally smooth approximants by simply gluing local ones. Among the others, we mention their possible use in the context of isogeometric analysis for investigating the approximation properties in the frame of multi-degree spline spaces \cite{Speleers:2019,ToshniwalSHMH:2020} and smooth spline spaces on multi-patch geometries \cite{CollinST:2016,Sande:2020,ToshniwalSH:2017}. 

{Moreover, as mentioned in Remark \ref{rem:outliers}, for smooth spline spaces the Ritz-type projector with boundary interpolation and its corresponding error estimates can be used to predict the number of outlier modes in the numerical approximation of eigenvalues of higher order Laplacians with (full) Dirichlet boundary conditions.
}

Finally, we remark that both \eqref{eq:Qproj} and \eqref{eq:Rproj} are global projectors.  As future line of research, it might be interesting to quantify the difference between them and local projectors such as B\'ezier projection \cite{ThomasSETE:2015} and the classical ones in \cite{Lyche:18,LycheS1975}.

\section*{Acknowledgements}
This work was supported 
by the Beyond Borders Programme of the University of Rome Tor Vergata through the project ASTRID (CUP E84I19002250005)
and
by the MIUR Excellence Department Project awarded to the Department of Mathematics, University of Rome Tor Vergata (CUP E83C18000100006).
The authors are members of Gruppo Nazionale per il Calcolo Scientifico, Istituto Nazionale di Alta Matematica.

\bibliography{nwidths}

\end{document}